\patchcmd{\thebibliography}{\section}{\subsection}{}{}
\g@addto@macro\@openbib@code{\setlength{\itemsep}{0pt}}
\newtheorem{theorem}{Theorem}[section]
\newtheorem{lemma}{Lemma}[section]
\newtheorem{rmk}[theorem]{Remark}
\newtheorem{cor}{Corollary}
\definecolor{darkgreen}{rgb}{0,0.5,0}
\author[1, 4]{Luciano N. Grippo \thanks{lgrippo@campus.ungs.edu.ar}}
\author[2, 4]{Min Chih Lin \thanks{oscarlin@dc.uba.ar}}
\author[3, 4]{Camilo Vera \thanks{cvera@ic.fcen.uba.ar}}
\affil[1]{Universidad Nacional de General Sarmiento. Instituto de Ciencias; Argentina.}
\affil[2]{Instituto de Cálculo and Departamento de Computación, Universidad de Buenos Aires, Argentina.}
\affil[3]{Instituto de Cálculo and Departamento de Matemática, Universidad de Buenos Aires, Argentina.}
\affil[4]{Consejo Nacional de Investigaciones Científicas y Técnicas, Argentina.}
\title{Perfect Edge Domination in $P_6$-free Graphs and in Graphs Without Efficient Edge Dominating Sets}
\date{}
\begin{document}

\maketitle

\begin{abstract}
	An edge of a graph dominates itself along with any edge that shares an endpoint with it. An efficient edge dominating set (also called a dominating induced matching, DIM) is a subset of edges such that each edge of the graph is dominated by exactly one edge in the subset. A perfect edge dominating set is a subset of edges in which every edge outside the subset is dominated by exactly one edge within it. In this article, we establish the NP-completeness of deciding whether a graph that does not admit any efficient edge dominating set has at least two perfect edge dominating sets. We also present a cubic time algorithm designed to identify a perfect dominating set of minimal cardinality for $P_6$-free graphs. Moreover, we show how this algorithm can be adapted to handle the weighted version of the problem and to count all perfect edge dominating sets as well as DIMs in a given graph, while preserving the same time complexity.
		
	\bigskip
	
	\noindent\textbf{Keywords:} efficient edge dominating set, induced matching, perfect edge dominating set, $P_6$-free graphs, DIM-less graphs.
\end{abstract}

\section{Introduction}

All graphs considered in this article are undirected, finite, and have neither loops nor multiple edges. For all definitions and notations used through this section, we refer the reader to Section~\ref{sec: preliminaries}. 

Let $G$ be a graph. We say that an edge $e$ of $G$ \emph{dominates} itself and every edge sharing an endpoint with it. An \emph{edge dominating set} is a subset of edges such that each edge of $G$ is dominated by at least one edge of the subset. An \emph{induced matching} in $G$ is a subset of edges such that each edge of $G$ is dominated by at most one edge of the subset. 

An \emph{efficient edge dominating set} is a dominating induced matching (DIM). Notice that an efficient edge dominating set may not exist. A graph having no DIM is called a \emph{DIM-less graph}. The problem of determining whether a graph has an efficient edge dominating set (DIM), denoted EED, is NP-complete in general~\cite{Gr-Sl-Sh-Ho-1993}. In addition, EED remains NP-complete in many classes as bipartite graphs of degree at most three~\cite{Lozin2002} or line graphs~\cite{Ko-Ro-2003}. In~\cite{CARDOSO2011521}, the authors prove that the problem is NP-complete for the class $\mathcal{S}_k$ formed by all bipartite $\{C_3, \ldots, C_k, H_1, \ldots, H_k\}$-free graphs of vertex degree at most three. Moreover, they showed that $\mathcal{S}=\bigcap_{k\ge 0} \mathcal{S}_k$ is a limit class. Cardoso et al., by demonstrating that EED can be expressed in terms of a monadic second-order logic (see~\cite{Co-Ma-Ro-2000}), also observed that for any class of graphs with bounded clique-width, EED can be solved in polynomial time (cf.~\cite[Lemma 5]{CARDOSO2011521}). 

Since cographs (graphs with no $P_4$ as induced subgraphs) have clique-width at most two, EED is polynomial-time solvable within cographs. These results led to investigating the complexity of EED for graphs without paths with at least five vertices, which was an open problem at that time. Some progress has been made since then. In~\cite{Br-Mo-2014}, Brandst\"adt and Mosca mention that the problem can be solved in quadratic time for $P_5$-free graph, because the class of $(\textrm{gem},P_5)$-free graphs has bounded clique-width. In addition, they found a linear time algorithm for solving the minimum weighted EED in $P_7$-free graphs. A few years later, the same authors extended this result to $P_8$-free graphs by developing a polynomial time algorithm using the same approach~\cite{Br-Mo-2017}. In the last five years, they have provided a polynomial time algorithm to solve EED in $P_9$-free graphs~\cite{Brandstadtdmgt2020} and $P_{10}$-free graphs~\cite{Brandstadt2024}. Many other articles identify graph classes where the minimum weighted EED can be solved in polynomial time, such as weakly chordal graphs~\cite{Br-2010}, hole-free graphs~\cite{Br-2010, Nevries-2014-thesis}, $S_{1,1,1}$-free graphs~\cite{CARDOSO2011521}, $S_{2,2,2}$-free graphs~\cite{He-Lo-Ri_Za-deW-2018}, and $S_{1,2,3}$-free graphs~\cite{Ko-Lo-Pu-2014}. Moreover, in \cite{Lu-Ko-Tang-2002}, a linear time algorithm is presented for solving this problem on generalized series-parallel graphs and chordal graphs. Here, $S_{i,j,k}$ denotes the graph with vertex set $\{u, x_1, \ldots, x_i, y_1, \ldots, y_j, z_1, \ldots, z_k\}$ such that $\{u, x_1, \ldots, x_i\}$ induces $P_{i+1}$, $\{u, y_1, \ldots, y_j\}$ induces $P_{j+1}$, and $\{u, z_1, \ldots, z_k\}$ induces $P_{k+1}$, with no additional edges.

Recently, Brandst\"adt and Mosca extensively explored the complexity of EED for $S_{i,j,k}$-free graphs; they proved that EED is solvable in polynomial time in all of the following graph classes: $S_{1,2,4}$-free graphs~\cite{Br-Mo-2020}, $S_{1,1,5}$-free graphs~\cite{Br-Mo-2020-3}, $S_{2,2,3}$-free graphs~\cite{Br-Mo-2020-2}. For a brief survey of known results on EED, we refer the reader to~\cite{Br2018}. For results related to the weighted version of EED see~\cite{LinMS14IPL, LinMS14}.

A \emph{perfect edge dominating set} (PED-set) is a set of edges such that each edge outside the set is dominated by exactly one edge within the set. This edge domination variant has been less studied than the efficient edge domination. Unlike a DIM, since the whole set of edges of a graph forms a PED-set of it, every graph always admits one. To the best of our knowledge, the  problem of deciding whether a graph admits a proper PED-set remains open. We refer to this decision problem as \emph{proper-PED}.

In connection with DIMs, it is not difficult to prove that a DIM in a graph is a PED-set of minimum cardinality~\cite{Lu-Ko-Tang-2002}. Lin et al. strengthened the former result by showing that the problem remains NP-complete when the input graph is a claw-free ($S_{1,1,1}$-free) graph with maximum degree at most three~\cite{Li-Lo-Mo-Sz}. In contrast, there exists a linear time algorithm to find a PED-set of minimum size for cubic claw-free graphs. Moreover, they proved that the proper-PED is polynomial-time solvable for $H$-free graphs with maximum degree at most $r$ (for some fixed $r\ge 3$) when $H$ is a linear forest, and NP-complete otherwise. This dichotomy result motivates the study of the complexity of PED for $P_k$-free graphs. When $k=5$, there exists a robust linear-time algorithm, given any input graph, either finds a minimum PED-set or exhibits an induced $P_5$ of the graph~\cite{Li-Lo-Mo-Sz, Moyano-2017-thesis}. Explicit mathematical formulations for PED, leading to exact algorithms, can be found in~\cite{doFor-Vi-Lin-Lu-Ma-Mo-Sz-2020}.

The goal of this paper is twofold: to determine the complexity of the proper-PED problem for connected DIM-less graphs, and to develop an efficient algorithm for $P_6$-free graphs, aiming to enhance the understanding of PED for $P_k$-free graphs. The paper is organized as follows. Section~\ref{sec: preliminaries} presents definitions and preliminary results used throughout the paper. Section~\ref{sec: DIM-less} contains a proof of the NP-completeness of proper-PED for connected DIM-less graphs. Section~\ref{sec: P_6-free} is devoted to presenting some results related to the correctness of the cubic-time algorithm for $P_6$-graphs. In Section~\ref{sec: the algorithm} provides a detailed description of the algorithm. Section~\ref{sec: weighted and counting version} explains how to adapt the algorithm of the previous section to solve the weighted version, and the problem of counting all PED-sets as well as DIMs. Finally, in Section~\ref{sec: conclusions}, we close the article mentioned the problem of counting DIMs in a graph.

\section{Preliminaries}~\label{sec: preliminaries}
\subsection{Efficient and perfect edge domination}
Let $G$ be a graph. The vertex and edge sets of $G$ are denoted by $V(G)$ and $E(G)$, respectively. If $uv\in E(G)$, we say that $u$ (resp. $v$) is \emph{adjacent} to $v$ (resp. $u$); we also say that  the edge $uv$ is \emph{incident} at $u$ (resp. $v$). The \emph{set of neighbors of $v$} is denoted by $N_G(v)$; i.e., all vertices $u\in V(G)$ adjacent to $v$. When the context is clear, we may write $N(v)$ instead of $N_G(v)$. We also define the \emph{closed neighborhood} of $v$ as $N_G[v]=N_G(v)\cup\{v\}$. The \emph{degree} of a vertex $v\in V(G)$ is $d_G(v)=|N_G(v)|$. A vertex $v$ is called a \emph{pendant vertex} in $G$ if $d_G(v)=1$. Two edges $e$ and $e'$ are \emph{adjacent} if they are incident at the same vertex. If $S\subseteq V(G)$, the \emph{subgraph induced} by $S$ is denote by $G[S]$, and $G-S$ stands for $G[V(G)\setminus S]$. Moreover, for a vertex $v\in V(G)$, we define $N_G^S(v)=N_G(v)\cap S$. If $H$ is a subgraph of $G$, we let $N_H(v) = N^{V(H)}_G(v)$. A subgraph $H$ of $G$ is called a \emph{spanning subgraph} of $G$ if $V(H)=V(G)$, and it is called a \emph{spanning tree} if $H$ is also a tree (i.e., a connected and acyclic graph).

We use $C_k$ and $P_k$ to denote the \emph{cycle} and the \emph{path on $k$ vertices}, respectively. The cycle $C_3$ is called a \emph{triangle}. An \emph{independent set} is a set of pairwise non-adjacent vertices. A \emph{dissociation set} is a subset of vertices in a graph where the subgraph induced  by this subset has maximum degree of at most 1. A \emph{bipartite graph} is a graph $G$ such that $V(G)$ can be partitioned into two independent sets $\{R,S\}$, called a \emph{bipartition}. A \emph{complete bipartite graph} is a bipartite graph with a bipartition $\{R,S\}$ such that every vertex in $R$ is adjacent to every vertex in $S$. We use $K_{p,q}$ to denote the complete bipartite graph with a bipartition $\{R,S\}$ such that $|R|=p$ and $|S|=q$. 

An edge $uv$ \emph{dominates} itself and any other edge adjacent to it. A subset $P\subseteq E(G)$ is a \emph{perfect edge dominating set} (PED-set) if every edge of $E(G)\setminus P$ is dominated by exactly one edge of $P$.

A perfect edge dominating set $P$ of a graph $G$ induces a partition of $V(G)$ into three subsets as follows: 

\begin{itemize}
	\item \emph{Black vertices} are those with at least two incident edges of $P$. We denote this subset of vertices by $B$.
	\item \emph{Yellow vertices} are those with exactly one incident edge of $P$. We denote this subset of vertices by $Y$.
	\item \emph{White vertices} are those with no incident edge of $P$. We denote this subset of vertices by $W$. 
\end{itemize}

An assignment of one of the three possible colors to each vertex of $G$ will be called a \emph{coloring} of $G$, and we refer to $(B,Y,W)$ as the \emph{$3$-coloring associated with $P$}. It is easy to see that the following conditions hold:
\begin{enumerate}
	\item $W$ is an independent set.
	\item $v\in Y$ if and only if $v$ is a pendant vertex in $G-W$. 
	\item If $v\in W$, then $N_G(v)\subseteq Y$.
	\item If $v\in B$, then $N_G^W(v)=\emptyset$ and $d_G(v)\ge 2$.
\end{enumerate}
Besides, a $3$-coloring $(B,Y,W)$ of a graph $G$ satisfying the four above conditions is associated with a perfect edge dominating set of $G$, consisting precisely of those edges having non-white endpoints (i.e., vertices with a non-white color assigned). A coloring is called \emph{partial} if only some vertices of $G$ have assigned colors; otherwise, it is called \emph{total}. A partial coloring is \emph{valid} if no two white vertices are adjacent, no yellow vertex has more than one non-white neighbor, and no black vertex has white neighbors. A total coloring is \emph{valid} if no two white vertices are adjacent, every yellow vertex has exactly one non-white neighbor, and no black vertex $v$ has white neighbors, and $d_G(v)\ge 2$.

Notice that, if $H$ is a subgraph of a graph $G$, and $\pi$ is a valid $3$-coloring  of $G$; i.e., a function assigning to each vertex in $V(G)$ a color black (b), yellow (y), and white (w), then $\pi$ restricted to $V(H)$ is a valid partial $3$-coloring of $H$. It is said that a valid partial $3$-coloring $\pi$ of a subgraph $H$ of $G$ \emph{can be extended to a valid $3$-coloring of $G$}, if $G$ has a valid $3$-coloring $\pi'$ such that, restricted to $V(H)$, it agrees with $\pi$, meaning that $\pi'(u)=\pi(u)$, for all $u\in V(H)\subseteq V(G)$. If $\pi$ generates a PED-set of minimum cardinality, we say that $\pi$ is an \emph{optimum valid $3$-coloring}.  

We say that a graph $G$ is an \emph{edge-weighted graph} if there exists a function $\omega:E(G)\longrightarrow\mathbb{R}$, and we refer to $\omega(e)$ as the \emph{weight} of the edge $e$. For any subset $D\subseteq E(G)$, we define its \emph{weight} as $\omega(D):=\displaystyle\sum_{e\in D}\omega(e)$. The \emph{weighted perfect edge domination problem} (WPED problem) consists of finding a PED-set $P$ of $G$ such that $\omega(P)$ is minimum. 

We say that a graph $G$ is a \emph{vertex-weighted graph} if there exists a function $\psi:V(G)\longrightarrow\mathbb{R}$, and we refer to $\psi(v)$ as the \emph{weight} of the vertex $v$. For any subset $S\subseteq V(G)$, we define its \emph{weight} as $\psi(S):=\displaystyle\sum_{v\in S}\psi(v)$.

Let $(B, Y, W)$ be the valid $3$-coloring of the weighted graph $G$ induced by an edge dominating set $P$. Define the function $\psi$ as $\psi(u) = \displaystyle \sum_{v \in N_G(u)} \omega(uv)$ for every $u \in V(G)$. Then, the weight of the PED-set $P$ in $G$ satisfies
\[
\omega(P) = \frac{\psi(V(G))}{2} - \psi(W). 
\]
It is not difficult to see that $P$ is a minimum perfect edge dominating set of $G$ if and only if $\psi(W)$ is maximum.

The \emph{PED-counting problem} for a graph $G$ consists of determining the number of PED-sets that $G$ admits. It follows  from~\cite{forte2022new} that the decision problem associated with the PED-counting problem is NP-complete (see also Lemma~\ref{lem: NFS graphs}).

There are three possible types of PED-sets in a graph $G$.
Let $P\subseteq E(G)$ be a PED-set of $G$.
\begin{enumerate}
	\item We say that $P$ is the \emph{trivial PED-set} if $P=E(G)$.
	\item We say that $P$ is a \emph{DIM} if no two edges in $P$ are adjacent.
	\item We say that $P$ is a \emph{proper PED-set} if it is neither the trivial PED-set nor a DIM.
\end{enumerate}
Now, consider a connected graph $G$.
\begin{enumerate}
	\item If $|V(G)|=1$, then $W=V(G)$ and $Y=B=\emptyset$. In this case, we have the trivial PED-set and a DIM.
	\item If $|V(G)|=2$, then $Y=V(G)$ and $W=B=\emptyset$. In this case, we have the trivial PED-set and a DIM.
	\item If $|V(G)|\geq 3$, then:
	\begin{itemize}
		\item The 3-coloring $(B,Y,W)$ induces the trivial PED-set if and only if $W=\emptyset$. Since $G$ is connected, there exists at least one vertex of degree greater than or equal two, and this vertex is black.
		\item The 3-coloring $(B,Y,W)$ induces a DIM if and only if $B=\emptyset, Y\neq\emptyset$ and $W\neq\emptyset$.
		\item The 3-coloring $(B,Y,W)$ induces a proper PED-set if and only if $B\neq\emptyset, Y\neq\emptyset$ and $W\neq\emptyset$.
	\end{itemize}
\end{enumerate}
\subsection{$P_6$-free graphs}
The following theorem is useful for our goal of developing a polynomial-time algorithm to find a PED-set of minimum cardinality in $P_6$-free graphs. Let $G$ be a graph. A \emph{dominating set} of $G$ is a set $S$ of vertices such that each vertex in $V(G) \setminus S$ is adjacent to at least one vertex in $S$. A \emph{dominating (induced) subgraph} of $G$ is a subgraph (respectively, an induced subgraph) $H$ of $G$ such that $V(H)$ is a dominating set.
\begin{theorem}\label{p6-free struct} \cite{DBLP:conf/cocoon/HofP08,DBLP:journals/dam/HofP10}
	A graph $G$ is $P_6$-free if and only if each connected induced subgraph of $G$ contains a dominating induced $C_6$ or a dominating (not necessarily induced) complete bipartite graph. Moreover, such a dominating subgraph of $G$ can be found in $O(|V|^3)$ time.
\end{theorem}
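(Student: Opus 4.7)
The statement is a biconditional with an algorithmic tail, so I would prove the two directions separately and then argue the complexity.

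For the easy ($\Leftarrow$) direction, I would argue by contrapositive. Suppose $G$ contains an induced $P_6$ on vertices $v_1, v_2, \ldots, v_6$, and let $H$ be the subgraph they induce; then $H \cong P_6$ is connected, so by the hypothesis, $H$ must contain either a dominating induced $C_6$ or a dominating complete bipartite subgraph. The first is impossible since $H$ has exactly six vertices and $H \ne C_6$. For the second, a $K_{p,q}$ subgraph needs $pq$ edges among its vertices; since $P_6$ is triangle-free and has only five edges, we are forced to $\min\{p,q\}=1$, so the structure would be a star $K_{1,q}$ centred at some $v_i$. But no vertex of $P_6$ is within distance one of all the others, so no such dominating star exists, a contradiction.

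For the ($\Rightarrow$) direction, it is enough to show that any connected $P_6$-free graph $G$ has one of the two dominating structures, because every connected induced subgraph of $G$ is itself connected and $P_6$-free. The plan is to use BFS layering. Fix $r\in V(G)$ and set $L_i=\{u : d_G(r,u)=i\}$. Because any vertex at distance five from $r$ would, together with a shortest $r$-path to it, give an induced $P_6$, one has $L_i=\emptyset$ for $i\ge 5$. Now case on the eccentricity $\varepsilon(r)$ of $r$. If $\varepsilon(r)\le 2$, the star centred at $r$ is already a dominating complete bipartite subgraph. If $\varepsilon(r)=3$, choose a shortest path $r=x_0,x_1,x_2,x_3$; using $P_6$-freeness, vertices in $L_3$ must all see $L_2$ heavily, and one extracts from $L_2\cup L_3$ a dominating $K_{p,q}$ (or, if this fails locally, a dominating induced $C_6$ using the path $x_0x_1x_2x_3$ together with two witnesses of obstructions). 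If $\varepsilon(r)=4$, take a shortest path $x_0x_1x_2x_3x_4$ and argue that any vertex outside $N[\{x_1,x_2,x_3\}]$, combined with the path, would induce a $P_6$; a finer case analysis on how vertices in $L_2\cup L_3\cup L_4$ attach to $x_2,x_3$ (and to each other) produces either an induced $C_6$ that ``closes'' part of the path, or a complete bipartite subgraph spanning two adjacent layers that dominates everything.

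The hard part is clearly this last structural case analysis: bounding the diameter is cheap, but forcing one of the two very restrictive dominating shapes asserted here (rather than merely a small connected dominating set, which follows from easier arguments) requires a careful exhaustive treatment of the adjacencies between consecutive layers.

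For the $O(|V|^3)$ bound I would make the structural case analysis constructive. Iterate over the choice of $r\in V(G)$ and run a BFS in $O(n+m)$ time; for each $r$, the analysis above prescribes where to look for the dominating $C_6$ or $K_{p,q}$ (only $O(n^2)$ candidate pairs of layers, each verified in $O(n+m)$). The dominant cost is $O(n)$ rooted searches each doing $O(n^2)$ work, giving the claimed $O(|V|^3)$ bound.
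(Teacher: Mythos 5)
This theorem is not proved in the paper at all: it is imported verbatim from van 't Hof and Paulusma \cite{DBLP:conf/cocoon/HofP08,DBLP:journals/dam/HofP10}, where it is the main result and its proof occupies a substantial structural analysis. So the relevant question is whether your attempt stands on its own, and it does not: the forward direction, which is the entire substance of the statement, is only gestured at. After bounding the eccentricity (correct: a shortest path is induced, so distance $5$ from the root would yield an induced $P_6$), you write that for $\varepsilon(r)=3$ one ``extracts'' a dominating $K_{p,q}$ from $L_2\cup L_3$ ``or, if this fails locally,'' a dominating induced $C_6$, and that $\varepsilon(r)=4$ needs ``a finer case analysis.'' That deferred case analysis \emph{is} the theorem. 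Nothing in the layering set-up explains why the dominating structure must take one of these two very rigid shapes rather than, say, an arbitrary small connected dominating subgraph; indeed the published proof does not proceed by BFS layers but by analysing a carefully chosen minimal dominating induced subgraph, and it is several pages long. As written, your argument would equally ``prove'' false strengthenings of the statement, which is the hallmark of a missing idea rather than an omitted routine verification. The $O(|V|^3)$ claim inherits the same gap, since the algorithm is just ``follow the (unspecified) case analysis constructively.''

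The backward direction is essentially right but two justifications are off. First, to rule out a $K_{p,q}$ with $p,q\ge 2$ inside $P_6$ you invoke triangle-freeness and the edge count; neither excludes $K_{2,2}$, which is triangle-free and has only four edges. The correct reason is that $P_6$ is acyclic while $K_{2,2}=C_4$ contains a cycle. Second, a dominating star $\{v_i\}\cup S$ with $S\subseteq N(v_i)$ dominates precisely when $v_i$ has eccentricity at most $2$ (the leaves also dominate), not ``distance one of all the others''; the conclusion survives because every vertex of $P_6$ has eccentricity at least $3$, but the stated criterion is the wrong one. Both slips are easily repaired; the unproved forward direction is not.
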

Our cubic-time algorithm in Section \ref{sec: P_6-free} for finding a PED-set of minimum cardinality for $P_6$-free graphs relies on considering all possible valid partial $3$-colorings for either the dominating induced $C_6$ or the dominating complete bipartite subgraph.
\section{DIM-less graphs}\label{sec: DIM-less}
		\begin{figure}
	\centering
	\begin{tikzpicture}[scale = 0.6, b/.style={circle, scale=0.4, draw=black, fill},ye/.style={circle, scale=0.4, draw=black, fill=yellow},w/.style={circle, scale=0.4, draw=black},lg/.style={circle, scale=0.4, draw=black, fill=lightgray}, line width=0.5pt]
		\draw[fill=cyan!30] (0,0) ellipse (2cm and 1cm);
		\node (0) [lg] at (0,0.5){};
		\node[above right] at (0,0.15) {\footnotesize $v$};
		\node (1) [lg] at (0,2){};
		\node[above right] at (0,2) {\footnotesize $v_1$};
		\node (2) [lg] at (-1,3){};
		\node[above left] at (-1,3) {\footnotesize $v_2$};
		\node (3) [ye] at (0,4){};
		\node[above right] at (0,4) {\footnotesize $v_3$};
		\node (4) [w] at (0,5){};
		\node (5) [ye] at (0,6){};
		\node (6) [b] at (0,7){};
		\node (7) [b] at (-1,8){};
		\node (8) [b] at (1,8){};
		\node (9) [b] at (0,9){};
		\draw [] (0) -- (1) -- (2) -- (3) -- (1)  (3) -- (4) -- (5) -- (6) -- (7) -- (9) -- (8) -- (6) -- (9) (7) -- (8);
	\end{tikzpicture}
	
	\caption{Graph $G'$: NSF graph $G$ (in cyan color) plus the gadget.}
	\label{fig: np-completeness}
\end{figure}
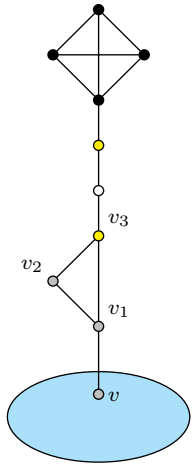

A graph is called a \emph{neighborhood star-free graph} (NSF graph) if every vertex of degree at least two belongs to a triangle. This family of graphs was introduced in~\cite{forte2022new}. In this section, we show that the decision problem PED is NP-complete even when the input is a connected DIM-less graph.

The complexity of the PED problem on NSF graphs was studied in~\cite{forte2022new}, where the following hardness result was obtained.

\begin{lemma}\cite{forte2022new}\label{lem: NFS graphs}
	The problem of deciding whether a given graph has a DIM is NP-complete when the input is a connected NSF graph. Moreover, in the positive instances, all non-trivial PED-sets correspond exactly to the DIMs.
\end{lemma}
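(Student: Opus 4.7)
My plan is to treat the two assertions separately: the NP-hardness of DIM detection on connected NSF graphs, and the structural fact that in positive instances every non-trivial PED-set must be a DIM (the reverse direction being immediate, since a DIM is always a PED-set and is non-trivial whenever $E(G)$ is not itself a perfect matching).

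For the NP-hardness, I would reduce from a known NP-complete variant of EED, for example EED on cubic graphs as proved in~\cite{Lozin2002}. Given a cubic instance $G$, the idea is to attach to each vertex that is not already in a triangle a small gadget that creates a triangle at that vertex without altering the DIM answer; the gadget must itself be NSF and must force a rigid internal DIM pattern so that DIMs of the enlarged graph correspond bijectively to DIMs of $G$. The design would be in the spirit of the construction already suggested by Figure~\ref{fig: np-completeness}, where a triangle together with a $K_4$ tail is hung off a distinguished vertex.

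For the structural claim, let $P$ be a non-trivial PED-set of a connected NSF graph $G$ with associated $3$-coloring $(B,Y,W)$. Since $P\neq E(G)$, some edge $uv$ lies outside $P$; if both $u$ and $v$ were non-white, the rule that $P$ is exactly the set of edges with two non-white endpoints would force $uv\in P$, a contradiction, so $W\neq\emptyset$. I aim to show $B=\emptyset$. Pick $v\in B$: by definition $d_G(v)\geq 2$, so NSF supplies a triangle $\{v,u_1,u_2\}$; since $v$ has no white neighbor, $u_1,u_2\in B\cup Y$ and all three triangle edges belong to $P$. Each $u_i$ then has at least two incident $P$-edges and is therefore in $B$, which proves that every triangle-neighbor of a black vertex is black. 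A second step shows that every yellow neighbor $y$ of a black vertex $v$ is a pendant: because $y\in Y$, its unique non-white neighbor is $v$, so if $d_G(y)\geq 2$ an NSF triangle through $y$ would either produce a white neighbor of $v$ (impossible, as $v\in B$) or two adjacent white vertices (violating independence of $W$).

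Combining these two observations, for any connected component $B_0$ of $G[B]$ the $G$-neighborhood of $B_0$ is contained in $B_0$ together with the pendant yellow vertices hanging off $B_0$; since these pendants have no further adjacencies, $B_0$ plus its pendants is a union of connected components of $G$, and connectivity forces this union to be all of $V(G)$. But then $W=\emptyset$, contradicting the non-triviality of $P$. I expect the main obstacle to be the NP-hardness reduction, where the gadget must simultaneously be NSF, preserve DIM-existence in both directions, and keep a clean translation from yes/no instances of the source problem; by contrast, the structural argument is a local-to-global propagation that only unfolds the coloring conditions together with the NSF hypothesis.
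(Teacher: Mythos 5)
The paper does not actually prove this statement: Lemma~\ref{lem: NFS graphs} is imported verbatim from~\cite{forte2022new} and used as a black box, so there is no in-paper argument to compare yours against. Judged on its own terms, your proposal splits into two halves of very different quality. The structural half is correct and complete: from a black vertex $v$ the NSF triangle through $v$ has all three edges in $P$ (both other endpoints are non-white because $v\in B$ forbids white neighbours), so its triangle-mates acquire two incident $P$-edges and are black; a yellow neighbour $y$ of a black vertex must be pendant, since an NSF triangle through $y$ would either hand $v$ a white neighbour or put two adjacent vertices in $W$; and the closure of a component of $G[B]$ under these two facts swallows all of $V(G)$ by connectivity, forcing $W=\emptyset$ and hence triviality of $P$. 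This cleanly shows every non-trivial PED-set of a connected NSF graph has $B=\emptyset$, i.e.\ is a DIM.

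The NP-hardness half, however, is a plan rather than a proof, and the plan as stated does not work. First, you attribute NP-completeness of EED on cubic graphs to~\cite{Lozin2002}; that reference concerns bipartite graphs of maximum degree three, which in particular contain no triangles, so \emph{every} non-pendant vertex of the source instance needs a gadget. Second, the proposed gadget --- ``attach a triangle at $v$ without altering the DIM answer'' --- is not achievable with a bare pendant triangle $\{v,a,b\}$: the edge $ab$ can only be dominated by one of $va$, $vb$, $ab$, so one of these must lie in any DIM, which forces $v$ to be either white or matched into the gadget and thereby \emph{forbids} $v$ from being yellow via an original edge. Any correct gadget must be NSF, must preserve DIM-existence in both directions, and must leave the colour options of $v$ unchanged (or change them in a controlled, analysed way); none of this is exhibited, and you acknowledge it as the main obstacle. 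Until such a gadget is constructed and both directions of the reduction are verified, the NP-completeness assertion of the lemma remains unproved in your write-up.
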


Given a connected NSF graph $G$, let $G'$ be the graph obtained by adding a gadget to $G$, a structure with thirteen edges, twelve within the gadget and one connecting a vertex $v$ (chosen as a vertex of maximum degree in $G$) to a vertex $v_1$, as shown in Fig.~\ref{fig: np-completeness}.

We obtain the following result by means of a reduction from the DIM problem. Note that, in the class of NSF graphs, deciding the existence of a non-trivial PED-set is equivalent to deciding the existence of a DIM.
%
\begin{theorem}\label{thm: dimless np completeness}
	Let $G$ be a connected DIM-less graph. Deciding whether $G$ admits a non-trivial PED-set is NP-complete.
\end{theorem}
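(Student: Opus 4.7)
The plan is to reduce from the DIM problem on connected NSF graphs (NP-complete by Lemma~\ref{lem: NFS graphs}) to the proper-PED problem on connected DIM-less graphs. Membership in NP is routine: given $P \subseteq E(G')$, one verifies in polynomial time that every edge outside $P$ is dominated by exactly one edge of $P$, that $P \neq E(G')$, and that $P$ is not an induced matching. The reduction maps a connected NSF graph $G$ to the connected graph $G'$ obtained by attaching the gadget of Fig.~\ref{fig: np-completeness} to a maximum-degree vertex $v$ of $G$; it is plainly polynomial.

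The first step is to prove that $G'$ is DIM-less. I would perform a short case analysis on the number of $K_4$-edges on $\{v_6,v_7,v_8,v_9\}$ contained in a putative DIM of $G'$: zero leaves $v_7v_8$ undominated, as the only outside edge $v_5v_6$ is not adjacent to it; exactly one $K_4$-edge leaves its opposite $K_4$-edge without a unique dominator; two or more edges either share a vertex or double-dominate some $K_4$-edge. Hence no DIM of $G'$ exists, and $G'$ belongs to the claimed class.

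Next, I would establish the equivalence that $G$ has a DIM if and only if $G'$ has a non-trivial (equivalently, proper) PED-set. For the forward direction, given a DIM $D$ of $G$, I would distinguish whether $v$ is $D$-saturated. If yes, take
\[
P := D \cup \{v_2v_3,\ v_5v_6\} \cup \{v_iv_j : 6 \le i < j \le 9\},
\]
coloured by $v_1,v_4 \in W$, $v_2,v_3,v_5 \in Y$, $v_6,\ldots,v_9 \in B$, and each $G$-vertex yellow or white according as it is $D$-saturated. If $v$ is unsaturated, take instead
\[
P := D \cup \{v_1v_2,\ v_4v_5,\ v_5v_6\} \cup \{v_iv_j : 6 \le i < j \le 9\},
\]
coloured by $v_1,v_2,v_4 \in Y$, $v_3 \in W$, $v_5,\ldots,v_9 \in B$, with $G$-vertices as before. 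Routine verification shows that each 3-coloring is valid and that $P$ is neither $E(G')$ nor an induced matching.

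For the backward direction, let $P$ be a non-trivial PED-set of $G'$ with coloring $(B,Y,W)$. The same analysis as for DIM-lessness, now applied to $P$, forces $v_6,\ldots,v_9 \in B$ and hence $v_5v_6 \in P$. Propagating the coloring constraints along $v_5v_4v_3$ and through the triangle $v_1v_2v_3$ reveals a short list of admissible sub-cases; the only one with $v_1 \notin W$ forces $v \in Y$ with every $G$-neighbour of $v$ in $W$, which, combined with the NSF hypothesis placing $v$ (having maximum degree and hence degree at least two) in a triangle of $G$, produces two adjacent white vertices in $W$ and violates its independence. In every surviving sub-case the restriction of $(B,Y,W)$ to $V(G)$ is a valid 3-coloring of $G$, so $P\cap E(G)$ is a PED-set of $G$; since $v$ has at least one white $G$-neighbour, at least one $G$-edge lies outside $P$, making $P\cap E(G)$ non-trivial, and Lemma~\ref{lem: NFS graphs} then yields a DIM of $G$. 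The main obstacle is exactly this backward case analysis: organizing the coloring propagation through the gadget and pinpointing the sub-case where the NSF hypothesis is indispensable.
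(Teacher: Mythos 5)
Your overall strategy coincides with the paper's: the same gadget, the same reduction from DIM on connected NSF graphs via Lemma~\ref{lem: NFS graphs}, NP-membership, the $K_4$ argument for DIM-lessness of $G'$, and a forward direction by exhibiting explicit valid $3$-colorings (your two colorings differ in detail from the paper's but both check out). The divergence is in the backward direction, where you replace the paper's global argument with a local propagation through the gadget; your summary of the outcome of that propagation is incorrect in several concrete places, and these are exactly the steps you flag as the main obstacle.

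Writing the gadget as $v\!-\!v_1$, triangle $v_1v_2v_3$, path $v_3v_4v_5v_6$, and the $K_4$ on $\{v_6,\dots,v_9\}$: after forcing $v_6,\dots,v_9\in B$, the admissible gadget colorings include $v_5\in Y$, $v_4\in W$, $v_3\in Y$, $v_2\in W$, $v_1\in Y$, $v\in W$, and also $v_5\in B$, $v_4\in Y$, $v_3\in W$, $v_1,v_2\in Y$, $v\in W$. Both have $v_1\notin W$ yet $v\in W$ and both survive, so the claim that ``the only sub-case with $v_1\notin W$ forces $v\in Y$ with every $G$-neighbour of $v$ white'' is false. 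Moreover, the all-black gadget coloring ($v_1,\dots,v_9\in B$) is admissible and allows $v\in B$ as well as $v\in Y$; your triangle argument kills only the $v\in Y$ branch, while the $v\in B$ branch must be excluded differently, namely because (as in the paper) a black vertex in the NSF graph $G$ forces every vertex of $G'$ to be non-white, i.e.\ $P=E(G')$, contradicting non-triviality. Second, ``since $v$ has at least one white $G$-neighbour'' fails precisely in the surviving sub-cases where $v$ itself is white; what you need (and what holds) is that some vertex of $V(G)$ is white, which already yields an edge of $G$ outside $P$. Third, ``having maximum degree and hence degree at least two'' is a non sequitur; the degenerate case $d_G(v)\le 1$, i.e.\ $G\in\{K_1,K_2\}$, needs the one-line separate treatment the paper gives it. With the sub-case list corrected, your plan does go through --- in every genuinely surviving sub-case either $v\in W$ or $v\in Y$ with $v_1\in W$, so the restriction to $V(G)$ is a valid total $3$-coloring and Lemma~\ref{lem: NFS graphs} finishes the argument --- but the paper's route is shorter: black anywhere in $G$ contradicts non-triviality outright, and the triangle at $v$ ensures a yellow $v$ has its unique non-white neighbour inside $G$, with no gadget propagation needed.
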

\begin{proof}
	Let $G$ be a connected NSF graph. We prove the result by applying Lemma~\ref{lem: NFS graphs}.
	
	Note that the six edges in the $K_4$ in the gadget, together with the edge incident to one of its vertices, belong to every PED-set of $G'$. Thus $G'$ is a connected DIM-less graph.
	
	If $v$ has degree at most one, it is easy to prove that $G$ has a DIM and $G'$ has a non-trivial PED-set.
		
	Now, assume that $v$ has degree at least two. 
	
	If $G$ has a DIM, then $v$ is either white or yellow. If $v$ is white (resp. yellow), we can complete the $3$-coloring to $G'$ by using the coloring of Fig.\ref{fig: np-completeness}, changing $v_1$ from gray to yellow (resp. white), and $v_2$ from gray to white (resp. yellow). Clearly, the resulting coloring corresponds to a non-trivial PED-set of $G'$.
	
	Now assume that $F$ is a non-trivial PED-set of $G'$. Notice that if any vertex $u$ in $G$ is black, then each vertex in $G'$ is non-white, since $G$ is an NSF graph, contradicting the fact that $F$ is a non-trivial PED-set. Hence, $v$ must be either white or yellow. Since $v$ belongs to a triangle in $G$, it follows that $E(G)\cap F$ is a DIM of $G$. 
\end{proof}
As a corollary, we obtain the following result, which solves the problem of determining the complexity of deciding whether a graph admits a proper PED-set.
\begin{cor}
	Let $G$ be a graph. Deciding whether $G$ admits a proper PED-set is NP-complete.
\end{cor}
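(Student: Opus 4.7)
The plan is to observe that the corollary is essentially a reformulation of Theorem~\ref{thm: dimless np completeness} combined with the three-way classification of PED-sets given in the preliminaries. I would first confirm membership in NP: given a candidate subset $P\subseteq E(G)$, one can verify in polynomial time that (i) every edge outside $P$ has exactly one neighbor in $P$, (ii) $P\neq E(G)$, and (iii) some pair of edges in $P$ share an endpoint (so $P$ is not a DIM). These three tests together certify that $P$ is a proper PED-set.

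For hardness I would reduce directly from the problem shown NP-complete in Theorem~\ref{thm: dimless np completeness}, namely deciding whether a connected DIM-less graph admits a non-trivial PED-set. Let $G$ be a connected DIM-less graph, and take $G$ itself as the instance of proper-PED. The key observation is that in a DIM-less graph the categories "non-trivial PED-set" and "proper PED-set" coincide: by the classification into trivial, DIM, and proper PED-sets recalled just before Theorem~\ref{thm: dimless np completeness}, a non-trivial PED-set is either a DIM or a proper PED-set, and the DIM case is excluded by hypothesis. Conversely, every proper PED-set is non-trivial by definition. Hence $G$ has a non-trivial PED-set if and only if $G$ has a proper PED-set, yielding the polynomial-time reduction.

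There is essentially no technical obstacle here; the only thing to be careful about is that the reduction works with no modification of the input graph, so the "DIM-less" promise of the source problem is preserved for free. In particular, one does not need to exhibit an explicit PED-set or certify DIM-lessness in the reduction: Theorem~\ref{thm: dimless np completeness} provides the hardness for exactly the instances we pass along, and the equivalence of decision outcomes is immediate from the trichotomy of PED-sets. Combined with NP-membership, this establishes the corollary.
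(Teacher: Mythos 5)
Your proposal is correct and follows essentially the same route as the paper: the corollary is obtained immediately from Theorem~\ref{thm: dimless np completeness} by noting that in a DIM-less graph the non-trivial PED-sets are exactly the proper PED-sets, so the identity reduction suffices. The paper leaves this implicit, while you additionally spell out NP-membership, which is a harmless completion of the argument.
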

In particular, if $G$ is a DIM-less graph, since every graph admits the trivial PED-set formed by its edge set, we obtain the following immediate corollary:
\begin{cor}\label{cor: DIM-less at least two PEDs}
	Let $G$ be a DIM-less graph. Deciding whether $G$ admits at least two PED-sets is NP-complete.
\end{cor}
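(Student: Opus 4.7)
The plan is to derive this immediately from Theorem~\ref{thm: dimless np completeness}. First I would note that membership in NP is routine: given a candidate set $F\subseteq E(G)$ distinct from $E(G)$, one can verify in polynomial time that $F$ is a PED-set, and this together with the trivial PED-set $E(G)$ provides a short certificate of the existence of at least two PED-sets.

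For hardness, I would reduce from the problem of deciding whether a connected DIM-less graph admits a non-trivial PED-set, which is NP-complete by Theorem~\ref{thm: dimless np completeness}. The key observation is that every graph has the trivial PED-set $E(G)$, so a DIM-less graph $G$ admits at least two PED-sets if and only if it admits a PED-set $P\neq E(G)$. Since $G$ is DIM-less, any such $P$ cannot be a DIM, and hence must be a proper PED-set (which is in particular non-trivial). Conversely, every non-trivial PED-set is, by definition, distinct from $E(G)$ and therefore provides a second PED-set alongside $E(G)$.

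Thus the two decision problems "does $G$ have at least two PED-sets?" and "does $G$ have a non-trivial PED-set?" coincide on the class of connected DIM-less graphs, and the identity map is a polynomial-time reduction from the latter to the former. Combining this with Theorem~\ref{thm: dimless np completeness} yields NP-completeness. I do not expect any genuine obstacle here; the only point that needs a line of justification is the equivalence between "at least two PED-sets" and "non-trivial PED-set" in the DIM-less setting, which is exactly where the hypothesis that $G$ is DIM-less is used.
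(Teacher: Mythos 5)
Your proposal is correct and follows exactly the route the paper intends: the paper presents this as an immediate corollary of Theorem~\ref{thm: dimless np completeness}, using precisely the observation that every graph has the trivial PED-set $E(G)$, so for a DIM-less graph ``at least two PED-sets'' is equivalent to ``some non-trivial PED-set.'' Your additional remark that any such second PED-set must in fact be a proper PED-set is accurate but not needed for the reduction.
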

Notice that deciding whether $G$ admits at least two PED-sets is NP-complete even when $G$ is an NSF graph. Therefore, the NP-completeness holds for general graphs as well. This result also follows as a consequence of Corollary~\ref{cor: DIM-less at least two PEDs}. 
\section{$P_6$-free graphs}~\label{sec: P_6-free}
In this section, we will state and prove a series of statements that enable us to establish the correctness of the algorithm presented in Section~\ref{sec: the algorithm}, leading to the proof of the following theorem:
\begin{theorem}\label{thm: main result}
	Given a $P_6$-free graph $G$, there is a cubic-time algorithm to find a perfect edge dominating set of minimum size for $G$.
\end{theorem}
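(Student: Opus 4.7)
\medskip

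The plan is to recast the problem as one of finding a valid $3$-coloring of $G$ of minimum PED-weight and then to exploit Theorem~\ref{p6-free struct}. Since an optimum PED-set decomposes over the connected components of $G$, we may assume $G$ is connected. As noted right after the definition of the weight function $\psi$, minimizing $|P|$ (or more generally $\omega(P)$) is equivalent to maximizing $\psi(W)$ over the valid $3$-colorings $(B,Y,W)$ of $G$; so the algorithm's target is to compute a valid $3$-coloring maximizing $\psi(W)$. We invoke Theorem~\ref{p6-free struct} to produce, in $O(n^3)$ time, a dominating subgraph $H$ of $G$ that is either an induced $C_6$ or a (not necessarily induced) complete bipartite graph $K_{p,q}$.

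The main idea is then: enumerate all \emph{relevant} valid partial $3$-colorings of $H$ and, for each of them, extend it optimally to $V(G)\setminus V(H)$. Extension is cheap because $V(H)$ dominates $V(G)$: each vertex $v\notin V(H)$ has a neighbour in $H$, and its color is heavily constrained by the conditions recalled in Section~\ref{sec: preliminaries} (a neighbour of a black vertex is non-white; a neighbour of a white vertex must be yellow; etc.), so, after fixing the partial coloring of $H$, the color of each $v\notin V(H)$ can be chosen from a short list while respecting the yellow/black count conditions. Given a valid extension, its $\psi(W)$ value is computed in linear time, so the whole extension step costs $O(n^2)$ per partial coloring of $H$.

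The easy case is $H=C_6$: there are at most $3^6$ assignments, so only $O(1)$ partial colorings to try, and the total cost is $O(n^3)$. The main obstacle is the complete bipartite case $H=K_{p,q}$ with sides $R,S$, where $p+q$ may be $\Theta(n)$, so a brute-force enumeration is hopeless. The plan is to show that valid partial $3$-colorings of $K_{p,q}$ admit a canonical form described by $O(1)$ discrete parameters plus the identity of at most a constant number of distinguished vertices, yielding only $O(n^{c})$ relevant colorings for some small constant $c$. The structural facts driving this are: whites form an independent set, so $W\cap R=\emptyset$ or $W\cap S=\emptyset$; if, say, $W\cap R\neq\emptyset$, then every vertex of $S$ has a white neighbour and must therefore be yellow; each such yellow vertex of $S$ has exactly one non-white neighbour in $G-W$, which forces all but at most one vertex of $R$ to be white, and the degree restriction on yellow vertices reciprocally restricts the colors on the $S$-side. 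A symmetric analysis handles the case where $W$ is entirely in $S$ or where $W\cap V(H)=\emptyset$. Enumerating over the $O(1)$ cases, together with the $O(n)$ or $O(n^2)$ choices of the few distinguished non-white vertices inside $H$, gives a polynomial list of partial colorings to try.

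Finally, the correctness argument is: any optimum valid $3$-coloring $\pi^*$ of $G$ restricts to a valid partial $3$-coloring of $H$ and, by the structural analysis above, this restriction falls into one of the enumerated canonical forms; the extension step then either recovers $\pi^*$ or finds a coloring with at least the same $\psi(W)$. Combining the $O(n^3)$ cost of finding $H$, the $O(n^{c})$ canonical colorings of $H$, and the $O(n^2)$ extension cost, and arguing that the bottleneck is the dominating-subgraph search, yields the claimed $O(n^3)$ algorithm. The delicate point to make rigorous in Section~\ref{sec: P_6-free} is precisely the bound on the number of canonical colorings of $K_{p,q}$ and the fact that the extension step can be carried out greedily (or by a small local optimization) without sacrificing optimality.
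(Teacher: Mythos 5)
Your overall strategy coincides with the paper's: invoke Theorem~\ref{p6-free struct} to obtain a dominating induced $C_6$ or a dominating complete bipartite subgraph in $O(n^3)$ time, enumerate the valid partial $3$-colorings of that dominating subgraph, and extend each one optimally to the rest of the graph. However, your proposal explicitly defers the two claims on which the whole argument rests, and these are precisely where the difficulty (and the use of $P_6$-freeness beyond Theorem~\ref{p6-free struct}) lies, so as written there is a genuine gap rather than a proof.

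Concretely: the assertion that, once the partial coloring of the dominating subgraph $H$ is fixed, ``the color of each $v\notin V(H)$ can be chosen from a short list'' and the extension ``can be carried out greedily \dots without sacrificing optimality'' is not true locally. In the configuration $R\subseteq W$, $S\subseteq Y$, the components of $G_0=G-(V(K)\cup X)$ each admit up to three colorings (all black, or one of two white/yellow orientations), and these choices are coupled globally through the yellow vertices of $S$, each of which may have at most one non-white neighbour in all of $G$; a per-vertex or per-component greedy choice can be invalid or suboptimal. The paper controls this branching with induced-$P_6$ arguments: Lemma~\ref{lem: induced bipartite w-y} (a dominating vertex exists on the non-yellow side of a bipartite $P_6$-free graph), Lemma~\ref{lem: components of type I} (the coloring of one type-I component forces the others), Lemma~\ref{lem: type II}, and the recursive structure of $H_\pi$ in Lemma~\ref{lem: optimum coloring}. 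Similarly, your claimed $O(n^c)$ bound on canonical partial colorings of $K_{p,q}$ must in fact be brought down to $O(1)$ configurations (with at most $O(n)$ choices of a distinguished vertex inside a few of them, as in Lemmas~\ref{lem: valid 3-coloring} and~\ref{lem: one black vertex}); otherwise your stated $O(n^2)$ extension cost times $O(n^c)$ colorings does not yield $O(n^3)$. Both of these are exactly the content of Section~\ref{sec: P_6-free}, which your proposal acknowledges but does not supply.
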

Recall that a connected $P_6$-free graph either has a dominating induced $C_6$, or has a dominating (not necessarily induced) complete bipartite subgraph (see Theorem~\ref{p6-free struct}). Based on this observation, we split this section into two parts: in the first one we consider the $P_6$-free connected graphs having a dominating induced $C_6$, and in the second one, we deal with the $P_6$-free connected graphs with a dominating (not necessarily induced) complete bipartite subgraph.
\subsection{Graphs with a dominating induced $C_6$}
Let us begin by considering connected $P_6$-free graphs that have a dominating induced $C_6$. The following lemma outlines the possible valid partial $3$-colorings for such a cycle.  
	\begin{lemma}\label{lem: coloreos del C_6}
		Let $G$ be a graph. All valid partial $3$-colorings $(B,Y,W)$ for an induced $C_6$ in $G$ are depicted in Fig.~\ref{Fig2}.
	\end{lemma}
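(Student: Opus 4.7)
The plan is to enumerate all valid partial $3$-colorings of the induced $C_6$ by a case analysis on $|W|$. Label the vertices of the cycle $v_1,\dots,v_6$ in cyclic order with indices taken modulo $6$. The three validity conditions for the partial coloring specialize as follows: (i) $W$ is an independent set in the $C_6$, (ii) every yellow vertex has at most one non-white cycle-neighbor, and (iii) every black vertex has no white cycle-neighbor. Since $W$ is independent in $C_6$, condition (i) already forces $|W|\le 3$, so there are only four subcases to treat.

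If $|W|=0$, then every vertex has two non-white cycle-neighbors, which is incompatible with (ii) for a yellow vertex, so all six vertices must be black. If $|W|=1$, say $v_1\in W$, then (iii) forces $v_2,v_6\in Y$; the remaining $v_3,v_4,v_5$ have both cycle-neighbors non-white and hence cannot be yellow, so (ii) makes them all black. If $|W|=2$, the two whites lie at cyclic distance $2$ or $3$. At distance $2$, say $v_1,v_3\in W$, condition (iii) forces $v_2,v_4,v_6\in Y$, and then $v_5$ has two yellow (hence non-white) cycle-neighbors, so it must be black. At distance $3$, say $v_1,v_4\in W$, every other vertex is adjacent to a white, so (iii) forces $v_2,v_3,v_5,v_6$ to be yellow; condition (ii) is satisfied because each of them has exactly one non-white cycle-neighbor. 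Finally, $|W|=3$ together with (i) forces $W$ to be one of the two maximum independent sets of $C_6$, and (iii) then makes the three remaining vertices yellow; each such yellow vertex has two white cycle-neighbors, so (ii) holds.

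Up to rotation these five templates exhaust all possibilities, and each is immediate to check as valid, so they coincide with the colorings displayed in Fig.~\ref{Fig2}, completing the argument. There is no genuine obstacle; the only subtlety to keep in mind is that for a partial coloring of the induced subgraph $C_6$, constraints (ii) and (iii) are evaluated against the \emph{colored} neighbors, i.e.\ against cycle-neighbors only, which is exactly what makes the remaining colors forced in each subcase.
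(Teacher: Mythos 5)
Your proof is correct and follows essentially the same elementary exhaustive case analysis as the paper; the only difference is organizational, since you split on $|W|\in\{0,1,2,3\}$ (using that $W$ is independent in $C_6$) whereas the paper fixes one white vertex and branches on the color of a vertex at distance two, but both arguments propagate the same three validity constraints around the cycle to force the five templates of Fig.~\ref{Fig2}. Your closing remark that the constraints are evaluated only against colored (i.e., cycle) neighbors is the right reading and matches the paper's usage.
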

	\begin{proof}
		Let $G$ be a graph with a valid $3$-coloring $(B,Y,W)$, and let $C=(v_1,v_2,v_3,v_4,v_5,v_6)$ be an induced cycle in $G$. Since a cycle has no pendant vertices, if $v_i\notin W$ for all $1\le i\le 6$, then $v_i\in B$ for each $i$. This corresponds to the valid partial $3$-coloring shown in case (a).
		
		 Assume, without loss of generality, that $v_1\in W$. Thus $v_2,v_6\in Y$. We now consider three cases:
		 \begin{itemize}
		 	\item \textbf{Case 1: } If $v_3\in W$, it implies that $v_4\in Y$, and consequently, $v_5\in B\cup W$, as $v_6\in Y$. If $v_5\in B$, then we obtain the 3-coloring (b). If $v_5\in W$, then we obtain the 3-coloring (e).
		 	\item \textbf{Case 2: } If $v_3 \in Y$, then $v_4 \in W$ and $v_5 \in Y$, corresponding to 3-coloring (d).
		 	\item \textbf{Case 3: } If $v_3 \in B$, then $v_4 \in B$ or $v_4 \in Y$, giving place to the valid partial $3$-coloring (b) or (c) of $C$, respectively.	  
		 \end{itemize}
		Hence, all valid partial $3$-colorings of the induced $C_6$ are as depicted.
	\end{proof}
	\begin{theorem}\label{thm: dominating cycle}
	Let $G$ be a connected $P_6$-free graph, and let $C$ be a dominating induced $C_6$ in $G$. Then $|N_{C}(v)|\ge 2$ for every $v\in V(G)\setminus V(C)$. Moreover, if $(B,Y,W)$ is a valid $3$-coloring of $G$, then one of the following conditions holds:
	\begin{itemize}
		\item The corresponding partial $3$-coloring of $C$ is of type (a), with $B=V(G)$ and $Y=W=\emptyset$. This valid 3-coloring corresponds to the trivial PED-set.
		\item The corresponding partial $3$-coloring of $C$ is of type (b), $c$ is the unique black vertex of $C$ and $d\in V(C)$ is the unique yellow vertex with only white neighbors in $V(C)$, then:
		\begin{itemize}
			\item $W\setminus V(C)=\{v\in V(G)\setminus V(C):N_C(v)\subseteq Y\}$,
			\item $Y\setminus V(C)=\{v\in V(G)\setminus V(C):\, (N_C(v)\subseteq \{c,d\}\cup W) \wedge (|N_C(v)\cap \{c,d\}|\leq 1)\}$,
			\item Exactly one of the following cases holds:
			\begin{itemize}
			      \item $B \setminus V(C) = \emptyset$ if $|(Y\setminus V(C))\cap N(d)|=1$.
			      \item $B \setminus V(C) = \{x\}$, $N_C(x)=\{c,d\}$ and $(Y\setminus V(C))\cap N(d)=\emptyset$.
			\end{itemize}

		\end{itemize}    
		\item The corresponding partial $3$-coloring of $C$ is of type (c) with: 
		\begin{itemize}
			\item $B\setminus V(C)=\{v\in V(G)\setminus V(C):\,N_C(v)\subseteq B\}$,
			\item $Y\setminus V(C)=\{v\in V(G)\setminus V(C):|N_C(v)|=2
			\wedge |N_C(v)\cap W|=|N_C(v)\cap B|=1\}$,
			\item  $W\setminus V(C)=\{v\in V(G)\setminus V(C):N_C(v)\subseteq Y\}$.
		\end{itemize}
		\item The corresponding partial $3$-coloring of $C$ is of 
		type (d) with: 
		\begin{itemize}
			\item $B=\emptyset$,
			\item $Y\setminus V(C)=\{v\in V(G)\setminus V(C): N_C(v)\subseteq W\}$,
			\item $W\setminus V(C)=\{v\in V(G)\setminus V(C):N_C(v) \subseteq Y\}$.
		\end{itemize}
		Thus, this valid 3-coloring corresponds to a DIM.
		\item The corresponding partial $3$-coloring of $C$ is of 
		type (e) with: 
		\begin{itemize}
			\item $Y\setminus V(C)=\{v\in V(G)\setminus V(C):|N_C(v)\cap Y|\le 1\}$, which is a dissociation set,
			\item $(B\cup W)\setminus V(C)=\{v\in V(G)\setminus V(C):\,N_C(v)\subseteq Y\}$ and $B\cup W$ is an independent set,
			\item If $G[Y]$ is 1-regular, then $B=\emptyset$ (the valid 3-coloring corresponds to a DIM). Otherwise, $B=\{x\}$ and $N(x)=\{y\in Y:y$ has degree zero in $G[Y]\}$.
		\end{itemize}
	\end{itemize}
	Moreover, every valid partial 3-coloring of $C$ can be extended to at most one valid 3-coloring except those of type $(e)$, in which case the number of PED-sets is $O(n)$.
\end{theorem}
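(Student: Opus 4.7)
I would begin by proving that every $v \in V(G) \setminus V(C)$ satisfies $|N_C(v)| \geq 2$. Since $C$ is dominating, $v$ has at least one cycle-neighbor; and if $N_C(v) = \{v_i\}$ were a singleton, then $v, v_i, v_{i+1}, v_{i+2}, v_{i+3}, v_{i+4}$ (indices modulo $6$) would form an induced $P_6$ in $G$, using the induced-ness of $C$ together with the hypothesis $N_C(v) = \{v_i\}$. This would contradict $P_6$-freeness. Next, assume $(B, Y, W)$ is a valid $3$-coloring of $G$; the restriction of this coloring to $V(C)$ is a valid partial $3$-coloring of $C$, so by Lemma~\ref{lem: coloreos del C_6} it is of one of the five types (a)--(e). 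I would then carry out a case analysis, determining for each $v \notin V(C)$ the color it must receive from $N_C(v)$ together with the cycle's partial coloring. Type (a) is immediate: every cycle vertex is black, so any $v \notin V(C)$ has at least two non-white cycle-neighbors, which rules out $v \in W$ (whose neighbors must all be yellow by condition~3) and $v \in Y$ (a pendant in $G - W$ by condition~2); hence $v \in B$, giving $V(G) = B$ and the trivial PED-set.

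For types (b), (c), and (d), the key observation is that several cycle-yellows already have their unique non-white neighbor on $C$, and so their off-cycle neighbors must all be white; combined with the rule that off-cycle vertices adjacent to a cycle-white must themselves be yellow and are then saturated in their non-white quota, this pins down the color of each off-cycle vertex via $N_C(v)$. In type (d), where no cycle vertex is black, this immediately forces $B = \emptyset$ and yields a DIM. In type (c), since $|B \cap V(C)| = 1$ and $|N_C(v)| \geq 2$, no off-cycle black can exist; the off-cycle yellow and white sets are then characterized by $N_C(v)$ as claimed. In type (b), the distinguished yellow $d$ needs an off-cycle non-white neighbor because its two cycle-neighbors are white: this neighbor is either an off-cycle yellow adjacent to $d$, or the unique off-cycle black $x$ whose cycle-neighborhood is precisely $\{c, d\}$. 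These subcases are mutually exclusive, and each admits at most one valid extension.

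Type (e) is the most delicate. Each of the three cycle-yellows $v_2, v_4, v_6$ has both cycle-neighbors white and therefore needs exactly one non-white neighbor off $C$. For $v \notin V(C)$: if $|N_C(v) \cap Y| \geq 2$, then $v$ has at least two non-white cycle-neighbors and cannot be yellow, so $v \in B \cup W$; otherwise $v$ is adjacent to a cycle-white and is forced yellow. Since each yellow has at most one non-white neighbor, $G[Y]$ has maximum degree one and decomposes into a matching together with isolated vertices. If $G[Y]$ is $1$-regular, no black is needed and the coloring corresponds to a DIM. Otherwise, the isolated yellows must share a common black neighbor, and a pigeonhole argument yields uniqueness: two distinct blacks $x_1, x_2$ would each have at least two cycle-neighbors among $\{v_2, v_4, v_6\}$, forcing some $v_i$ to have both $x_1$ and $x_2$ as non-white neighbors, a contradiction. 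Thus $B = \{x\}$ with $N(x) = \{y \in Y : y \text{ is isolated in } G[Y]\}$. The $O(n)$ bound on extensions arises because several vertices in $V(G)$ may share this same neighborhood, each giving a distinct valid choice for the single black $x$. The main obstacle throughout is the precise off-cycle analysis in types (b) and (e), where the characterization of possible off-cycle black vertices requires careful handling of the unique non-white neighbor constraint and of the induced structural restrictions imposed by a dominating $C_6$.
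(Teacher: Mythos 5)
Your proposal is correct and follows essentially the same route as the paper: the $P_6$ argument for $|N_C(v)|\ge 2$, restriction to the five partial colorings of Lemma~\ref{lem: coloreos del C_6}, and a case-by-case determination of each off-cycle vertex's color from $N_C(v)$ and the saturation of cycle-yellows. If anything, your type~(e) analysis (the pigeonhole argument for $|B|\le 1$ and the explanation of the $O(n)$ count via the choice of the single black vertex) is spelled out more explicitly than in the paper's own proof.
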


\begin{proof}
	Assume that $C=(v_1,v_2,v_3,v_4,v_5,v_6)$ is a dominating induced $C_6$ of $G$. Suppose that there exists a vertex $v\in V(G)\setminus V(C)$ with exactly one neighbor in $V(C)$, say $v_1$. Then the set $\{v,v_1,v_2,v_3,v_4,v_5\}$ induces $P_6$ in $G$, contradicting the $P_6$-freeness of $G$. Hence, every $v\in V(G)\setminus V(C)$ satisfies $|N_C(v)|\ge 2$. 
	
	Now, consider each valid partial $3$-coloring type of $C$:
	\vspace{0.3cm}

	\textbf{Type (a):} Since each vertex in the dominating set $V(C)$ is black, it follows that each vertex in $V(G)$ must be black.
	 	
	\vspace{0.3cm}
	\textbf{Type (b):} Let $v\in V(G)\setminus V(C)$. 
	\begin{itemize}
		\item If $v\in W\setminus V(C)$, then $N_C(v)\subseteq Y$. Conversely, if $N_C(v)\subseteq Y$, noting that two of the three yellow vertices in $V(C)$ have non-white neighbors, then $v\in W\setminus V(C)$. 
		\item If $v \in Y$, then $N_C(v)$ intersects $W\cap V(C)$, $B\cap V(C) = \{d\}$, or $Y\cap V(C) = \{c\}$. Thus, $N_C(v) \cap W \cap V(C)\neq \emptyset$ and $|N_C(v)\cap \{c,d\}|\leq 1$. Conversely, if $N_C(v) \cap W \cap V(C)\neq \emptyset$, then $v \in Y$. On the one hand, if $B \setminus V(C) = \emptyset$, then $|(Y\setminus V(C))\cap N(d)|=1$. On the other hand, if $x\in B \setminus V(C)\neq \emptyset$, then $N_C(x)=\{c,d\}$ and thus $B \setminus V(C) = \{x\}$.
	\end{itemize}
	
	\textbf{Type (c):}
	\begin{itemize}
		\item If $v \in W\setminus V(C)$, then $N_C(v) \subseteq Y$. Conversely, if $N_C(v) \subseteq Y$ and each yellow vertex in $V(C)$ has a non-white neighbor, then $v \in W$.
		\item If $v \in Y$, then $|N_C(v)| = 2$, and $|N_C(v) \cap W| = |N_C(v) \cap B| = 1$, as $v$ must connect to both a white and a black vertex. Conversely, if $|N_C(v)|=2$, and $|N_C(v)\cap W|=|N_C(v)\cap B|=1$, then  $v\in Y\setminus V(C)$.
		\item If $v \in B\setminus V(C)$, then $N_C(v) \subseteq B$, since yellow vertices in $V(C)$ already have a non-white neighbor. Conversely, if $N_C(v) \subseteq B$, then $v \in B$.
	\end{itemize} 

	\textbf{Type (d):}
	\begin{itemize}
		\item If $v \in W\setminus V(C)$, then $N_C(v) \subseteq Y$. Conversely, if $N_C(v) \subseteq Y$, and each yellow vertex in $V(C)$ has a non-white neighbor, then $v \in W\setminus V(C)$.
		\item If $v \in Y\setminus V(C)$, then $N_C(v) \subseteq W$, as all non-white neighbors of yellow vertices in $V(C)$ lie within $V(C)$. Conversely, if $N_C(v)\subseteq W$, then  $v\in Y\setminus V(C)$.
		\item If all non-white neighbors of yellow vertices in $V(C)$ are in $V(C)$, then $B\setminus V(C)=\emptyset$.
	\end{itemize}
	 
	\textbf{Type (e):}
	\begin{itemize}
		\item If $v \in Y$ and has no non-white neighbors in $V(C)$, then $N_C(v) \subseteq W$. Otherwise, $|N_C(v) \cap Y| = 1$. Conversely, if $N_C(v) \subseteq W$ or $|N_C(v) \cap Y| = 1$, and $|N_C(v)| \ge 2$, then $v \in Y\setminus V(C)$. Clearly, $Y\setminus V(C)$ is a dissociation set.
		\item Clearly, if $N_C(v) \subseteq Y$ and $|N_C(v)| \ge 2$, then $v \in B \cup W$. Additionally, $|B| = 1$ and thus $(B\cup W)\setminus V(C)$ is an independent set.
		
		The last condition immediately follows.
		%
		%
	\end{itemize}
\end{proof}

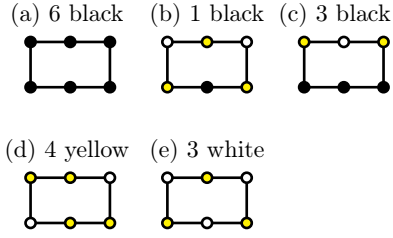
\begin{figure}
	\centering
	\begin{tikzpicture}[scale = 0.6, b/.style={circle, scale=0.4, draw=black, fill},ye/.style={circle, scale=0.4, draw=black, fill=yellow},w/.style={circle, scale=0.4, draw=black},
		line width=1pt]
		\node (0) [b] at (-0.87,0){};
		\node (1) [b] at (0,0){};
		\node (2) [b] at (0.87,0){};
		\node (3) [b] at (0.87,1){};
		\node (4) [b] at (0,1){};
		\node (5) [b] at (-0.87,1){};
		\node (a)[scale=0.9] at (0-0.075,1.6){(a) 6 black};
		\draw[] (0) -- (1) -- (2) -- (3) -- (4) -- (5) -- (0);
		\node (a0) [ye] at (-0.87+3,0){};
		\node (a1) [b] at (0+3,0){};
		\node (a2) [ye] at (0.87+3,0){};
		\node (a3) [w] at (0.87+3,1){};
		\node (a4) [ye] at (0+3,1){};
		\node (a5) [w] at (-0.87+3,1){};
		\node (a)[scale=0.9] at (3,1.6){(b) 1 black};
		\draw[] (a0) -- (a1) -- (a2) -- (a3) -- (a4) -- (a5) -- (a0);
		\node (b0) [b] at (-0.87+6,0){};
		\node (b1) [b] at (0+6,0){};
		\node (b2) [b] at (0.87+6,0){};
		\node (b3) [ye] at (0.87+6,1){};
		\node (b4) [w] at (0+6,1){};
		\node (b5) [ye] at (-0.87+6,1){};
		\node (a)[scale=0.9] at (5.8,1.6){(c) 3 black};
		\draw[] (b0) -- (b1) -- (b2) -- (b3) -- (b4) -- (b5) -- (b0);
		
		\node (0) [w] at (-0.87,-3){};
		\node (1) [ye] at (0,-3){};
		\node (2) [ye] at (0.87,-3){};
		\node (3) [w] at (0.87,-2){};
		\node (4) [ye] at (0,-2){};
		\node (5) [ye] at (-0.87,-2){};
		\node (a)[scale=0.9] at (0-0.075,1.6-3){(d) 4 yellow};
		\draw[] (0) -- (1) -- (2) -- (3) -- (4) -- (5) -- (0);
		\node (a0) [ye] at (-0.87+3,-3){};
		\node (a1) [w] at (0+3,-3){};
		\node (a2) [ye] at (0.87+3,-3){};
		\node (a3) [w] at (0.87+3,-2){};
		\node (a4) [ye] at (0+3,-2){};
		\node (a5) [w] at (-0.87+3,-2){};
		\node (a)[scale=0.9] at (3,1.6-3){(e) 3 white};
		\draw[] (a0) -- (a1) -- (a2) -- (a3) -- (a4) -- (a5) -- (a0);
	\end{tikzpicture}
	
	\caption{Five possible valid partial $3$-colorings of an induced dominating $C_6$}
	\label{Fig2}
\end{figure}
\subsection{Graphs with a dominating complete bipartite subgraph}
Now, let us examine a connected $P_6$-free graph $G$ that contains a dominating, though not necessarily induced, complete bipartite subgraph $K$, where $K$ includes $K_{p,q}$ as a spanning subgraph with a bipartition $\{R,S\}$, where $|R|=p$ and $|S|=q$. We will explore all possible valid partial $3$-colorings for $K$. It is no hard to see that any dominating complete bipartite subgraph can be extended to a maximal one in linear-time. Hence, we can assume that $K$ is maximal, assuming this additional cost.
\subsubsection{$V(K)\cap B=\emptyset$}\label{subsubsec: V(K) int B empty}
First, we will consider the case in which $K$ has a valid partial $3$-coloring with no black vertices. Let us start by considering the case in which $G$ is also a bipartite graph.

	\begin{lemma}\label{lem: induced bipartite w-y}
		Let $G$ be a bipartite connected $P_6$-free graph with at least two vertices, and let $\{M,N\}$ be its bipartition. If $G$ admits a valid $3$-coloring $(B,Y,W)$ such that $N\subseteq Y$, then there exists a vertex in $M$ that is adjacent to every vertex in $N$.
	\end{lemma}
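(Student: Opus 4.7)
The plan is to argue by contradiction: assume that no vertex of $M$ dominates $N$ and construct an induced $P_6$ in $G$. Since $N\subseteq Y$, each $v\in N$ has a unique non-white neighbor, which I denote $\phi(v)\in M$. Writing $M_W$, $M_Y$, $M_B$ for the white, yellow, and black vertices of $M$, the bipartiteness of $G$ combined with $N\subseteq Y$ forces $N_G(u)=\phi^{-1}(u)$ for every $u\in M_Y\cup M_B$; in particular each vertex of $M_Y$ is a pendant and each vertex of $M_B$ has at least two $N$-neighbors, while vertices of $M_W$ may have any subset of $N$ as their neighborhood. The cases $|N|\leq 1$ are immediate, and if every $u\in M$ had $|N_G(u)|\leq 1$ then $G$ would be a disjoint union of stars centered at the $N$-vertices, contradicting connectedness when $|N|\geq 2$.

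Choose $u^*\in M$ maximizing $|N_G(u^*)|$, so $|N_G(u^*)|\geq 2$. Pick $v_0\in N\setminus N_G(u^*)$ and set $u_0:=\phi(v_0)\neq u^*$. A connected $P_6$-free graph has diameter at most $4$, so in the bipartite graph $G$ the distance from $u^*$ to $v_0$ is odd and at most $4$, hence exactly $3$; fix a shortest path $u^*-v_1-u'-v_0$ and set $u_1:=\phi(v_1)$. Since $v_0\in N_G(u')\setminus N_G(u^*)$ and $|N_G(u')|\leq|N_G(u^*)|$ by maximality, some $v_2\in N_G(u^*)\setminus N_G(u')$ must exist; set $u_2:=\phi(v_2)$. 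A routine chord check shows that $v_2-u^*-v_1-u'-v_0$ is an induced $P_5$, and at least one of the extensions $v_2-u^*-v_1-u'-v_0-u_0$, $u_2-v_2-u^*-v_1-u'-v_0$, $u_2-v_2-u^*-v_1-u_0-v_0$, or $u_1-v_1-u^*-v_2-u_0-v_0$ is an induced $P_6$ unless $u_0=u_1=u_2$. When $u^*\in M_B$ we have $u_1=u_2=u^*\neq u_0$, so an extension succeeds; the only obstruction is the case $u^*\in M_W$ with $u_0=u_1=u_2\in M_B$ and $\{v_0,v_1,v_2\}\subseteq \phi^{-1}(u_0)=N_G(u_0)$.

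Resolving this degenerate configuration is the main technical obstacle. To do so, I use that $u_0$ itself does not dominate $N$: pick $v_3\in N\setminus N_G(u_0)$ and set $u_3:=\phi(v_3)\neq u_0$. A short case analysis on the adjacencies of $v_3$ to $u^*$ and to $u'$ yields an induced $P_6$ in each case: when $v_3\in N_G(u^*)\setminus N_G(u')$ the path $v_3-u^*-v_2-u_0-v_0-u'$ is induced, when $v_3\in N_G(u')$ the path $u_3-v_3-u'-v_0-u_0-v_2$ is induced, and when $v_3\notin N_G(u^*)\cup N_G(u')$ one uses a shortest $v_3$-$u^*$ path of length three and reapplies the maximality of $|N_G(u^*)|$ to produce a $u^*$-neighbor outside the neighborhood of the internal white vertex $u_a$ on that path, yielding an induced $P_6$ of the form $v_c-u_3-v_3-u_a-v_0-u_0$ after verifying that $\phi^{-1}(u_0)$ is disjoint from the $\phi$-images of $v_3$ and $v_c$. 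Every case contradicts the $P_6$-freeness of $G$, completing the proof.
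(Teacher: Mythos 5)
Your overall strategy is a direct contradiction argument that avoids the van~'t~Hof--Paulusma structure theorem entirely (the paper instead splits on whether $G$ has a dominating induced $C_6$ or a dominating complete bipartite subgraph), and most of it checks out: the reduction to $|N_G(u^*)|\ge 2$, the distance-$3$ path $u^*\hbox{-}v_1\hbox{-}u'\hbox{-}v_0$, the induced $P_5$, and the case analysis showing that one of your four extensions succeeds unless $u_0=u_1=u_2\in M_B$ with $u^*,u'\in M_W$ are all correct, as are sub-cases $v_3\in N_G(u^*)\setminus N_G(u')$ and $v_3\in N_G(u')$ of the degenerate configuration.

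The gap is in the last sub-case, $v_3\notin N_G(u^*)\cup N_G(u')$, which is precisely where the real difficulty sits. The alleged induced $P_6$ of the form $v_c\hbox{-}u_3\hbox{-}v_3\hbox{-}u_a\hbox{-}v_0\hbox{-}u_0$ uses two edges that are never established: $u_a v_0$ (the vertex $u_a$ is defined as the internal $M$-vertex of a shortest $v_3$--$u^*$ path, which gives no adjacency whatsoever to $v_0$) and $v_c u_3$ (the vertex $v_c$ is chosen in $N_G(u^*)\setminus N_G(u_a)$, which gives no adjacency to $u_3$). Moreover, the ``verification'' you invoke --- that $\phi^{-1}(u_0)$ is disjoint from the $\phi$-images of $v_3$ and $v_c$ --- compares a subset of $N$ with elements of $M$, so it is vacuously true and cannot be doing any work. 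The case is not vacuous ($v_3$ may well lie at distance $3$ from both $u^*$ and $u'$), and a correct treatment has to contend with genuine obstructions: for instance, taking a shortest $v_3$--$u_0$ path $v_3\hbox{-}x\hbox{-}y\hbox{-}u_0$ one gets an induced $P_5$ $u_3\hbox{-}v_3\hbox{-}x\hbox{-}y\hbox{-}u_0$, but extending it by a neighbor of $u_0$ fails exactly when the (necessarily white) vertex $x$ dominates $\phi^{-1}(u_0)$, and the natural follow-up paths through $u^*$ can close into an induced $C_6$ rather than a $P_6$. Until this sub-case is argued in full, the proof is incomplete; the paper sidesteps the issue by working inside a dominating $C_6$ or a maximum dominating biclique, where the candidate $P_6$'s endpoints are pinned down by the dominating structure.
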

	\begin{proof}
		By Theorem~\ref{p6-free struct}, the graph $G$ either contains a dominating induced $C_6$ or a dominating complete bipartite subgraph $K_{p,q}$. 

		Assume first that $G$ has a dominating induced cycle $C=(v_1,v_2,v_3,v_4,v_5, v_6)$. Without loss of generality, suppose the vertices with odd indices lie in $M$. Since $G$ is $P_6$-free, each vertex in $M$ must have least two neighbors in $\{v_2,v_4,v_6\}$. Hence, every vertex in $M$ is either white or black, as yellow vertices have exactly one non-white neighbor. Besides, since $G$ is bipartite and $N\subseteq Y$, at least one vertex in $M$ must be black. Furthermore, $|M\cap B|=1$; otherwise, two black vertices would share a neighbor in $\{v_2,v_4,v_6\}$, violating the validity of the coloring. Thus, every vertex $u\in N$ must be adjacent to the unique black vertex in $M$, since $u$ needs a non-white neighbor and $N(u)\subseteq M$.
				
		Now, suppose $G$ contains a dominating complete bipartite subgraph $K$ isomorphic to $K_{p,q}$. Without loss of generality, assume that $p+q$ is as large as possible. Let $\{U,V\}$ be the bipartition of $K$, where $U\subseteq M$, $V\subseteq N$, $|U|=p$, and $|V|=q$. The lemma is trivially true if $p=1$.
				
		Assume now that $p\ge 2$. Suppose that $(B,Y,W)$ is a valid $3$-coloring of $G$, and let $u\in U$ be a non-white vertex. Then, there can be no other non-white vertex in $M$; otherwise, some vertex in $V$ would be adjacent to more than one non-white neighbor, contradicting the properties of yellow vertices. Therefore, $u$ must be adjacent to every vertex in $N$.
					
		Now, assume $U \subseteq W$, so there must exist a non-white vertex $c_1 \in M \setminus U$. Since $p+q$ is maximum, $c_1$ is adjacent to some $v_1 \in V$ and non-adjacent to at least one other vertex $v_2 \in V$. Additionally, there exists another non-white vertex $c_2 \neq c_1$ in $M \setminus U$ that is adjacent to $v_2$. Because the coloring is valid, $c_2$ must be non-adjacent to $v_1$.
		
		Suppose, towards a contradiction, that no vertex in $M$ is adjacent to all vertices in $N$. Hence, there exists a vertex $d_1\in N\setminus V$. Since $d_1\in Y$, it can be adjacent to at most one of $c_1$ and $c_2$. Suppose, without loss of generality, that $d_1$ is adjacent only to $c_1$. Since $p+q$ is maximum, there exists $u\in U$ such that $u$ is non-adjacent to $d_1$. Consequently, the set $\{d_1,c_1,v_1,u,v_2,c_2\}$ induces $P_6$, contradicting the assumption that $G$ is $P_6$-free. 
		
		Suppose now that $d_1$ is non-adjacent to both $c_1$ and $c_2$. Thus, there must exist a third non-white vertex $c_3\in M\setminus U$, distinct from both $c_1$ and $c_2$, that is adjacent to $d_1$. Besides, since $K$ is a dominating complete bipartite subgraph and the coloring is a valid, $c_3$ must be adjacent to some $v_3\in V$, distinct from $v_1$ and $v_2$. Using a similar argument as before (replacing $c_1$ with $c_3$, and $v_1$ with $v_3$), we again obtain an induced $P_6$, a contradiction. 
		
		In all cases, assuming that no vertex in $M$ is adjacent to every vertex in $N$ leads to a contradiction. Therefore, such a vertex must exist.		
	\end{proof}
	The following lemma describes a possible valid partial $3$-coloring of $K$ when $V(K)\cap B=\emptyset$. Recall that $\{R,\, S\}$ denotes the bipartition of $K$.
		\begin{lemma}\label{lem: dominating star}
			Let $G$ be a $P_6$-free graph that contains a dominating (not necessarily induced) complete bipartite subgraph $K$. If $G$ admits a valid $3$-coloring $(B,Y,W)$ such that $V(K)\cap B=\emptyset$, then exactly one of the following conditions holds:
			\begin{enumerate}
				\item $|R\cap Y|=|S\cap Y|=1$, and $K$ contains $K_{1,t}$ as a spanning tree, for some positive integer $t$; or
				\item $R\subseteq W$ and $S\subseteq Y$, or $R\subseteq Y$ and $S\subseteq W$.
			\end{enumerate}
		\end{lemma}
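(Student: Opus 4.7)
The plan is to exploit two structural facts. First, since $V(K)\cap B=\emptyset$, every vertex of $V(K)=R\cup S$ lies in $Y\cup W$. Second, since $K$ contains $K_{p,q}$ as a spanning subgraph, every pair $(r,s)\in R\times S$ is an edge of $G$. I would begin by noting that $R\cap W$ and $S\cap W$ cannot both be non-empty: otherwise a pair of adjacent white vertices would arise, violating the independence of $W$ listed among the conditions of a valid $3$-coloring. Up to swapping the roles of $R$ and $S$, I may therefore assume $R\cap W=\emptyset$, i.e., $R\subseteq Y$.

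Next I would split into two main cases according to whether $S\cap W$ is empty. If $S\cap W=\emptyset$, then $V(K)\subseteq Y$. Fixing any $r\in R$, its yellow status provides exactly one non-white neighbor in $G$, but $r$ is adjacent to every vertex of $S\subseteq Y$, hence $|S|\le 1$; symmetrically $|R|\le 1$, so $K\cong K_{1,1}$ and condition~1 holds with $t=1$. Otherwise $S\cap W\ne\emptyset$, and I would further distinguish whether $S\cap Y$ is empty. If $S\cap Y=\emptyset$, then $S\subseteq W$ and condition~2 holds. Otherwise, I would pick $s_1\in S\cap Y$ and $s_2\in S\cap W$. For every $r\in R$, $r$ is yellow and adjacent to both $s_1$ and $s_2$, so its unique non-white neighbor in $G$ must be $s_1$; in particular no other vertex of $S$ can be non-white, which forces $|S\cap Y|=1$. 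Applying the symmetric argument to this unique $s_1$, whose $R$-neighbors are all yellow, forces $|R|=1$. Therefore $K$ is the complete bipartite graph $K_{1,|S|}$, which is itself a spanning star $K_{1,t}$ with $t=|S|$, and $|R\cap Y|=|S\cap Y|=1$, so condition~1 holds.

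To conclude, I would briefly verify mutual exclusivity: condition~1 requires a yellow vertex in each part of the bipartition, whereas condition~2 forces one part to be entirely white, so the two conditions never coexist. The only subtle point, which I expect to be the main thing to get right, is to apply the yellow constraint ``exactly one non-white neighbor in $G$'' against the spanning $K_{p,q}$-edges, which are genuine edges of $G$; this reduces the argument to a purely local analysis of $V(K)$ without needing to reason about neighbors outside $V(K)$.
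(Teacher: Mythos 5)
Your proof is correct and uses the same two ingredients as the paper's argument: the independence of $W$ across the spanning $K_{p,q}$ edges, and the fact that a yellow vertex has exactly one non-white neighbor; your version is merely organized by first forcing $R\subseteq Y$ (up to swapping sides) and then casing on $S$, and it spells out the existence of the yellow pair and the mutual exclusivity, which the paper leaves implicit. No gaps.
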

		\begin{proof}
			If condition 2. holds, then there is nothing to prove. Suppose instead that condition $2.$ does not hold, and let $r\in R$ and $s\in S$ be yellow vertices that are adjacent vertices. Thus, all vertices in $R\setminus\{r\}$ and $S\setminus\{s\}$ must be white. This implies that either $|R|=1$ or $|S|=1$, since otherwise there would exist two adjacent white vertices. Consequently, $K$ contains a spanning tree isomorphic to $K_{1,t}$ for some positive integer $t$.
		\end{proof}
		The following lemma describes how all possible valid partial $3$-coloring for $K$ can be extended to a valid $3$-coloring for $G$ when $|R\cap Y|=|S\cap Y|=1$.
		\begin{lemma}\label{lem: valid 3-coloring}
			Let $G$ be a $P_6$-free graph containing a maximal dominating (not necessarily induced) complete bipartite subgraph $K$. If $G$ has a valid $3$-coloring $(B,Y,W)$ such that $|R\cap Y|=|S\cap Y|=1$, then:
			\begin{enumerate}
			\item $|R|=1$ or $|S|=1$. 
			\item If $R = \{r\}$, then $S=N_G(r)$, and $G[S]$ verifies exactly one of the three following conditions:
			\begin{itemize}
				\item $G[S]$ does not have any edge, which implies that $S$ is an independent set.
				\begin{enumerate}
					\item Let $Y'=\{x\in V(G)\setminus V(K): |N^S(x)|\geq 2\}$, and $W'=\{s\in S: N^{Y'}(s)\ne\emptyset\}$. Then $Y'\subseteq Y$ and $W'\subseteq W$.
					\item Let $G'=G-(R\cup Y'\cup W')$. Any pair of vertices $x,x' \in S\setminus W'$ verifies $N_{G'}(x)\cap N_{G'}(x')=\emptyset$. Let $W''=\{s\in S\setminus W': N_{G'}(s) \mbox{ contains two adjacent vertices}\}$, and let $Y''=\{x\in V(G)\setminus (V(K)\cup Y'): N^{W''}(x)\ne\emptyset\}$. Then $S'=S\setminus (W'\cup W'')\ne\emptyset$ for some vertex $s\in S'\cap Y$,  $V(G)\setminus (V(K)\cup Y'\cup Y''\cup N(s))\cup\{s\}\subseteq Y$, and $(V(G)\setminus(V(K)\cup Y'\cup Y'')\cap N(s))\cup (S'\setminus \{s\})\subseteq W$. In addition, $V(G)\setminus (V(K)\cup N(s))$ induces a $1$-regular subgraph in $G$.
				\end{enumerate}
				\item $G[S]$ has exactly one edge $ss'$. Then one of $s$ or $s'$ must be the unique yellow neighbor of $r$, and the color of all other vertices can be determined. There are at most two valid 3-colorings.
				\item $G[S]$ has exactly $k\geq 2$ edges, and there is a unique vertex $s$ of degree $k$ in $G[S]$. Then $s$ is the unique yellow neighbor of $r$, and the color of all other vertices can be determined. There is at most one valid 3-coloring.
			\end{itemize}
			\item Any of these valid 3-coloring corresponds to a DIM and there are at most $O(n)$ different DIMs.
			\end{enumerate}			
		\end{lemma}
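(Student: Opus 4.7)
The plan is to verify the three enumerated claims in order, using the four validity conditions from Section~\ref{sec: preliminaries}, the ambient assumption $V(K)\cap B=\emptyset$ (which is the hypothesis of Subsection~4.2.1), and the maximality of $K$.

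For item~1, I would proceed by contradiction. Since $V(K)\cap B=\emptyset$, every vertex of $R\cup S$ is yellow or white, and because each of $R,S$ contains exactly one yellow vertex, the remaining vertices are white. If both $|R|\ge 2$ and $|S|\ge 2$, one can pick a white vertex in each side; the edge joining them lies in $K$, contradicting the independence of $W$.

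For item~2, assume $R=\{r\}$. The identity $S=N_G(r)$ is then forced by the maximality of $K$: any $v\in N_G(r)\setminus S$ is adjacent to every vertex of $R=\{r\}$, so appending $v$ to $S$ yields a larger dominating complete bipartite subgraph. I next split on the three possibilities for $G[S]$. In the edgeless sub-case, my plan is a chain of forcing arguments to justify the defining sets. For $Y'\subseteq Y$: an external vertex with at least two $S$-neighbors has a white $S$-neighbor (since $s$ is the unique yellow vertex in $S$), so it cannot be white (two adjacent whites) or black (blacks have no white neighbors), hence is yellow. For $W'\subseteq W$: an $S$-vertex with a $Y'$-neighbor cannot be $s$, otherwise $s$ would have two non-white neighbors beyond its unique one $r$. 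For the disjointness of $G'$-neighborhoods: a common $G'$-neighbor of two vertices of $S\setminus W'$ would have two $S$-neighbors and not lie in $V(K)$ (as $G[S]$ is edgeless), and would therefore sit in $Y'$, contradicting its presence in $G'$. The arguments for $W''\subseteq W$ and $Y''\subseteq Y$ are analogous. Once these are in place, any $x\in V(G)\setminus(V(K)\cup Y'\cup Y'')$ has, by domination and $N_G(r)=S$, a unique $S$-neighbor $s''$; the case $s''=s$ forces $x$ to be white (as a non-$r$ neighbor of yellow $s$), while $s''\in S\setminus\{s\}\subseteq W$ forces $x$ to be yellow, matching the two claimed inclusions. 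The $1$-regularity of $G[V(G)\setminus(V(K)\cup N(s))]$ follows because every such vertex is yellow, in a DIM each yellow vertex has a unique yellow partner, and a short case check shows this partner is neither $r$, nor $s$, nor any vertex in $N(s)\setminus V(K)$ (all of which are white or excluded), so it remains in the same set.

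For the remaining two sub-cases the unique-yellow hypothesis pins $s$ down almost completely. If $G[S]$ has exactly one edge $ab$, then $s\in\{a,b\}$, for otherwise both $a,b\in S\setminus\{s\}\subseteq W$ would be adjacent whites; this yields at most two colorings. If $G[S]$ has $k\ge 2$ edges with a unique degree-$k$ vertex, then $s$ must be that vertex, since otherwise the max-degree vertex would be white with at least two white $S$-neighbors, again violating independence of $W$. Item~3 is then immediate: throughout the case analysis no black vertex is ever produced outside $V(K)$, and $V(K)\cap B=\emptyset$ by hypothesis, so $B=\emptyset$ and each resulting coloring corresponds to a DIM; the total count is $O(n)$, dominated by the at most $|S'|\le n$ choices of the yellow vertex $s$ in the edgeless sub-case, plus at most two in the one-edge sub-case and one in the $k$-edge sub-case. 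The main technical obstacle I anticipate is the edgeless sub-case: keeping the bookkeeping of $Y',W',W'',Y''$ coherent, verifying that their defining conditions partition the external vertices in a way consistent with a valid $3$-coloring, and confirming that the output is indeed a DIM. The other two sub-cases and the counting are comparatively direct applications of the four validity conditions to the rigid structure of $S$.
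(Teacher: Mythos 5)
Your proposal is correct and follows essentially the same route as the paper: item~1 via the ``two adjacent white vertices'' argument (the paper delegates this to Lemma~\ref{lem: dominating star}, which you re-derive inline), item~2 via maximality of $K$ and the same case split on $E(G[S])$ with the same forcing chain for $Y'$, $W'$, $W''$, $Y''$, and item~3 from $B=\emptyset$. If anything, you supply more detail than the paper does for the final inclusions and the $1$-regularity claim, which the paper dismisses with ``follows by the definition of admissible $3$-coloring.''
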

		\begin{proof}
			Since $|R\cap Y|=|S\cap Y|=1$, we have $R\cap Y=\{r\}$ and $S\cap Y=\{s\}$. By Lemma \ref{lem: dominating star}, the subgraph $K$ contains a spanning tree isomorphic to $K_{1,t}$ for some positive integer $t$. Therefore, $|R|=1$ or $|S|=1$, and Condition 1 follows.
			
			Assume that $R = \{r\}$. Since $K$ is maximal, then $S = N_G(r)$. Since $V(K)\cap B=\emptyset$, every vertex in $S\setminus\{s\}$ is colored white.
			
			Suppose that $S$ is an independent set. Let $x\in V(G)\setminus V(K)$. If $|N^S(x)|\geq 2$, then $x\in Y$, because $x$ has at least one white neighbor in $S$. Thus, $s'\in W$ for each vertex $s'\in W'$, and $N_{G'}(x)\cap N_{G'}(x') = \emptyset$ for each $x,\, x'\in S\setminus W'$. Analogously, it can be proved that $W''\subseteq W$ and $Y''\subseteq Y$. Therefore, $S'=S\setminus (W'\cup W'')\ne\emptyset$ and some of $s\in S'$ must be a yellow vertex.
			
			Let $x\in W''$ with two adjacent neighbors $y$ and $z$ in $V(G)\setminus (V(K)\cup Y')$. Suppose that $x$ is a yellow vertex. Thus, $y$ must be yellow and $z$ must be white, or vice versa. Additionally, $x$ is a yellow vertex with two non-white neighbors, $y$ (or $z$) and $r$, contradicting that the $3$-coloring is admissible. Therefore, $W''\subseteq W$ and $Y''\subseteq Y$. The rest of this part of the statement follows by the definition of admissible $3$-coloring.
			
			Suppose that $E(G[S]) = \{ss'\}$. Since $s$ and $s'$ are adjacent to the yellow vertex $r$, one of them must be a yellow vertex. The color of the vertices in $V(G)\setminus V(K)$ depends on they are adjacent or non-adjacent to this yellow vertices. Therefore, there are at most two valid total $3$-colorings.
			
			Suppose now that $|E(G[S])|\ge 2$. Since $r$ is a yellow vertex, $G[S]$ is isomorphic to $K_{1, g} + h K_1$ with $g + h = |S|$. Additionally, the vertex $x$ of maximum degree in $G[S]$ must be the yellow vertex of $S$. Consequently, there is only one valid $3$-coloring.
			
			Finally, it is clear that $B=\emptyset$. Therefore, each valid total $3$-coloring is a DIM.
		\end{proof}
		Given a connected $P_6$-free graph $G$ with a dominating (not necessarily induced) complete bipartite subgraph $K$, and a valid $3$-coloring $(B,Y,W)$ such that $R\subseteq W$ and $S\subseteq Y$, we use $X$ to denote the subset of vertices in $V(G)\setminus V(K)$ with at least one neighbor in $R$. We denote the subgraph $G-(V(K)\cup X)$ by $G_0$. Note that $S$ dominates $V(G_0)$. Let $C$ be a connected component of $G_0$. When $C$ is bipartite, we denote the bipartition of $C$ by $\{U_C,V_C\}$.
	\begin{lemma}\label{lem: K white and yellow}
		Let $G$ be a connected $P_6$-free graph that contains a dominating (not necessarily induced) complete bipartite subgraph $K$. If $G$ admits a valid $3$-coloring $(B,Y,W)$ such that $R\subseteq W$ and $S\subseteq Y$, then the following conditions hold:
	\begin{enumerate}
		\item $X\subseteq Y$.
		\item $X\cup S$ is a dissociation set.
		\item If $C$ is a non-bipartite connected component of $G_0$, then $V(C)\subseteq B$.
		\item If $C$ is a bipartite connected component of $G_0$, then one of the following conditions holds:
		\begin{enumerate}
			\item $V(C)\subseteq B$; or
			\item $U_C\subseteq W$ and $V_C\subseteq Y$; or
			\item $U_C\subseteq Y$ and $V_C\subseteq W$.
		\end{enumerate}
	\end{enumerate}
	\end{lemma}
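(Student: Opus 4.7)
The plan is to verify the four conditions in turn, leveraging two basic facts: every $v \in V(G_0)$ has at least one neighbor in $S$ (because $K$ is dominating and, since $v \notin X$, $v$ has no neighbor in $R$), and a yellow vertex has exactly one non-white neighbor.

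Statements (1) and (2) are short. For (1), if $x \in X$ has a neighbor $r \in R \subseteq W$, then $x$ cannot be white (two whites would be adjacent) and cannot be black (a black vertex has no white neighbor); hence $x \in Y$. For (2), by (1) and the hypothesis $S \subseteq Y$ we have $X \cup S \subseteq Y$; any vertex of $X \cup S$ with two neighbors in $X \cup S$ would thus have two non-white neighbors, contradicting the definition of a yellow vertex, so $G[X \cup S]$ has maximum degree at most one, i.e., it is a dissociation set.

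For (3) and (4), fix a connected component $C$ of $G_0$ and write $B_C, Y_C, W_C$ for its colored parts. Since $S$ dominates $V(G_0)$ and $S \subseteq Y$, any yellow $u \in Y_C$ already has its unique non-white neighbor in $S$, which lies outside $V(C)$; consequently, every neighbor of $u$ inside $V(C)$ is white. A white vertex has only yellow neighbors, and a black vertex has no white neighbor and, by the previous sentence, cannot be adjacent in $C$ to any yellow vertex of $V(C)$ either. Hence $B_C$ has no edge in $C$ to $Y_C \cup W_C$; since $C$ is connected, either $B_C = V(C)$ or $B_C = \emptyset$.

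In the first subcase $V(C) \subseteq B$, yielding (3) and case (a) of (4). In the second subcase $V(C) = Y_C \cup W_C$ and every edge of $C$ runs between $Y_C$ and $W_C$, so $C$ is bipartite; its unique bipartition (up to swap, with a trivial adjustment if $|V(C)| = 1$) must coincide with $\{U_C, V_C\}$, giving case (b) or (c) of (4). This subcase is excluded when $C$ is non-bipartite, completing (3). The principal subtlety is the assertion that a black vertex of $V(C)$ cannot be adjacent to a yellow vertex of $V(C)$, which is precisely where the dominating role of $S$ enters: $S$ absorbs the unique non-white neighbor of each $u \in Y_C$, forcing all $V(C)$-neighbors of $u$ to be white and thereby decoupling $B_C$ from $Y_C \cup W_C$.
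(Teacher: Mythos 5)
Your proof is correct and follows essentially the same route as the paper's: both arguments rest on the observation that $S\subseteq Y$ dominates $V(G_0)$, so each yellow vertex of $G_0$ has its unique non-white neighbor absorbed by $S$ and therefore only white neighbors inside its component, which makes black all-or-nothing on each connected component and forces a proper white/yellow $2$-coloring otherwise. The only cosmetic difference is that the paper handles non-bipartite components by exhibiting an odd cycle with two adjacent yellow vertices, while you obtain the same conclusion by showing the black-free case forces bipartiteness.
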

	\begin{proof}
		Let $\pi$ be a valid $3$-coloring assignment of $G$. 		
		
		Statement $1.$ follows from the definition of a valid $3$-coloring and the fact that $R\subseteq W$.
		
		Clearly, Statement $2.$ holds.
		
		Let $C$ be a connected component of $G_0$. If $C$ contains a black vertex $u$, then, since $S$ dominates $V(C)$, it follows by induction on the distance from $u$ that every vertex in $C$ must also be black.
		
		Now suppose $C$ is a non-bipartite component of $G_0$, and let $H$ be an odd induced cycle in $C$. If no vertex of $H$ is black, then $H$ must contain two adjacent yellow vertices $u$ and $v$. But then $u$ would have two non-white neighbors: $v$ and some $x \in S$ that dominates $u$, contradicting the coloring condition. Hence, all vertices of $H$, and therefore of $C$, must be black.
		
		Now suppose $C$ is a bipartite component of $G_0$. If $V(C) \subseteq B$, then condition (a) holds. Assume instead that $C$ contains non-black vertices. Since $S$ dominates $V(C)$ and no yellow vertex can have two non-white neighbors, it follows that all vertices in $C$ are either white or yellow, and no two adjacent vertices can be yellow. Thus, the coloring induces a proper 2-coloring on $C$ with colors white and yellow. Since the bipartition of a connected bipartite graph is unique (up to swapping sides), we conclude that either $U_C \subseteq W$ and $V_C \subseteq Y$, or $U_C \subseteq Y$ and $V_C \subseteq W$.
	\end{proof}
	By $S_1$ (resp. $X_1)$, we refer to the subset of vertices $v\in S$ (resp. $v\in X)$ such that $d_{G[X\cup S]}(v)=1$. Let $S_2=S\setminus S_1$ (resp. $X_2=X\setminus X_1$).
	Note that $X_2\cup S_2$ is an independent set, since $d_{G[X\cup S]}(v)=0$ for all $v\in X_2\cup S_2$. Let $C$ be a non-trivial bipartite  connected component of $G_0$ that has at least one vertex $x$ with a neighbor in $S_2$. We say that $C$ is of \emph{type I} if there exists a vertex $v\in S_2$ such that $v$ has a neighbor $c$ and a non-neighbor $d$, both in $V(C)$, and $cd$ is an edge of $E(C)$. Such vertex $v$ is called an~\emph{indicator} of $C$. The component $C$ is called of \emph{type II} if $|V(C)|\ge 2$ and every vertex in $S_2$ with a neighbor in this component is adjacent to each vertex of $V(C)$.
	\begin{rmk}\label{rmk: invalid coloring}
		Let $G$ be a connected $P_6$-free graph having a dominating (not necessarily induced) complete bipartite subgraph $K$ with $R\subseteq W$ and $S\subseteq Y$. If $N_G(v)\cap V(G_0)=\emptyset$ for some $v\in X_2\cup S_2$, then $G$ has no valid $3$-coloring.
	\end{rmk}
	\begin{proof}
		Suppose, towards a contradiction, that $G$ has a valid $3$-coloring, and $v$ is a vertex in $X_2\cup S_2$ such that $N_G(v)\cap V(G_0)=\emptyset$. If $u\in N_G(v)$, then $u\not\in V(G_0)$; i.e., $u\in V(K)\cup X=R\cup S\cup X$. Furthermore, $u\not\in X\cup S$ because, according to the definition of $X_2$ and $S_2$, it follows that $d_{G[X\cup S]}(v)=0$. Hence, $u\in R$. Since $R\subseteq W$, $u\in W$. Thus, as $X_2\cup S_2\subseteq Y$, $v$ is a yellow vertex with all its neighbors being white vertices. Therefore, $G$ has no valid 3-coloring, contradicting the assumption that $G$ has a valid $3$-coloring.
	\end{proof}
	The following lemma ensures that the colors assigned to the vertices in a connected component of type I within a valid $3$-coloring for $G$ also determine the colors of each vertex in any other component of type I.
	\begin{lemma}\label{lem: components of type I}
		Let $G$ be a connected $P_6$-free graph with a dominating (not necessarily induced) complete bipartite subgraph $K$ where $R\subseteq W$ and $S\subseteq Y$. Let $C$ be a connected component of type I in $G_0$, and let $v$ be an indicator of $C$. Suppose $c$ and $d$ are two adjacent vertices in $C$ such that $v$ is adjacent to $c$ and non-adjacent to $d$. If $x\in (X_2\cup S_2)\setminus\{v\}$, then one of the following conditions holds:
		\begin{enumerate}
			\item $x$ is adjacent to $c$ or $d$, or
			\item $N^{V(G_0)\setminus V(C)}(x)\subseteq N^{V(G_0)\setminus V(C)}(v)$.
		\end{enumerate}
	\end{lemma}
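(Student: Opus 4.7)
The plan is to prove the contrapositive by assuming that both conditions fail and constructing an induced $P_6$ in $G$, which contradicts the $P_6$-freeness. Concretely, suppose $x$ is adjacent to neither $c$ nor $d$, and that there exists $u \in N^{V(G_0)\setminus V(C)}(x) \setminus N^{V(G_0)\setminus V(C)}(v)$. Since $x \in X_2 \cup S_2 \subseteq X \cup S$, the vertex $x$ has at least one neighbor in $R$: if $x \in X$ this is by definition of $X$, while if $x \in S$ then $x$ is adjacent to every vertex of $R$ because $K$ is complete bipartite. Pick any such $r \in R \cap N(x)$. The candidate induced path is
\[
u,\; x,\; r,\; v,\; c,\; d.
\]

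The first step is to verify that these six vertices are pairwise distinct. This is immediate from the fact that $R$, $S$, $X$, and $V(G_0)$ are pairwise disjoint, together with $c \neq d$ (they are adjacent) and $u \neq c,d$ (since $u \notin V(C)$). The next step checks the five required edges: $u \sim x$ and $x \sim r$ by choice, $r \sim v$ because $r \in R$, $v \in S$ and $K$ is complete bipartite, and $v \sim c$, $c \sim d$ by hypothesis on $C$ and on the indicator $v$.

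The main work is verifying that no additional edges exist, which is where the hypotheses are used. The non-edges $x \not\sim c$ and $x \not\sim d$ come from the failure of condition 1; $v \not\sim d$ comes from $v$ being an indicator; $v \not\sim u$ comes from the failure of condition 2. The non-edges $x \not\sim v$ follow because $X_2 \cup S_2$ is independent. For the remaining ones note that $u \in V(G_0) = V(G) \setminus (V(K) \cup X)$, so $u$ has no neighbor in $R$ (otherwise $u \in X$), giving $u \not\sim r$; also $u$ lies in a component of $G_0$ different from $V(C)$, so $u \not\sim c$ and $u \not\sim d$. Finally, since $c,d \in V(G_0)$, neither $c$ nor $d$ can have a neighbor in $R$ (again, otherwise they would be in $X$), hence $r \not\sim c$ and $r \not\sim d$.

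The main obstacle, and the place where the structural hypotheses really pay off, is the step that rules out the edges incident to $r$: this uses in a crucial way that vertices of $V(G_0)$ never touch $R$, which is the very definition that separates $X$ from $V(G_0)$. Once those are discarded, the sequence $u,x,r,v,c,d$ induces a $P_6$, contradicting Theorem~\ref{p6-free struct}'s hypothesis on $G$, and hence either condition 1 or condition 2 of the lemma must hold.
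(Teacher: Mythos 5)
Your proof is correct and follows essentially the same route as the paper: assuming both conclusions fail, it exhibits the induced path $u,x,r,v,c,d$ (the paper's $\{z,x,r,v,c,d\}$) and derives a contradiction with $P_6$-freeness. Your write-up is in fact more explicit than the paper's in verifying all fifteen vertex pairs, in particular the non-edges from $r$ and $u$ into $V(G_0)$, which the paper leaves implicit.
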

	\begin{proof}
		If condition 1. holds, there is nothing to prove. Let us assume then that $x\in X_2\cup S_2$, with $x\neq v$, and it is non-adjacent to both $c$ and $d$. Suppose, towards a contradiction, that there exists a vertex $z\in V(G_0)\setminus V(C)$ that is adjacent to $x$ and non-adjacent to $v$. Recall that $v$ is non-adjacent to $x$ (due to the definition of $X_2$ and $S_2$). Besides, there exists a vertex $r\in R$ that is adjacent to both $x$ and $v$ (due to the completeness of $K$). Hence, $\{z,x,r,v,c,d\}$ induces $P_6$. The contradiction arises from assuming the existence of a vertex $z$ adjacent to $x$ and non-adjacent to $v$. Therefore, every vertex $z\in V(G_0)\setminus V(C)$ adjacent to $x$ is also adjacent to $v$.
		
	\end{proof}
	By using similar arguments to Lemma~\ref{lem: components of type I}, we obtain the following remark.
	\begin{rmk}\label{rmk: components of type I}
		Let $C$ and $C'$ be connected components of type I. Assume that $C$ has an indicator $v$, that is adjacent to $c$ and non-adjacent to $d$ with $cd\in E(C)$ such that: 
		\begin{enumerate}
		        \item there exists a vertex $w\in S_2$ adjacent to $d$ and non-adjacent to $c$, or
		        \item $V(C)\subseteq B$.
		\end{enumerate}
		Then the coloring of $C$ determines the coloring of $C'$.
		\end{rmk}
	The below lemma characterizes the component of type II of $G_0$ when $G$ has a valid $3$-coloring.
	\begin{lemma}\label{lem: type II}
		Let $G$ be a connected $P_6$-free graph with a dominating (not necessarily induced) complete bipartite subgraph $K$ where $R\subseteq W$ and $S\subseteq Y$. If $G$ has a valid $3$-coloring, and $C$ is a connected component of $G_0$ of type II, then either $|U_C|=1$, or $|V_C|=1$.
	\end{lemma}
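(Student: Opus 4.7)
The plan is to combine two structural facts: the type II condition supplies a yellow vertex $v \in S_2$ whose non-white neighbors are sharply restricted, and Lemma \ref{lem: K white and yellow} constrains the possible coloring patterns on $C$. First I would fix a valid $3$-coloring of $G$ and, using the defining precondition for types I and II (namely, that $C$ contains a vertex with a neighbor in $S_2$), choose a vertex $x \in V(C)$ with a neighbor $v \in S_2$; the type II hypothesis then upgrades this to $v$ being adjacent to every vertex of $V(C)$. Since $S \subseteq Y$, the vertex $v$ is yellow and therefore has exactly one non-white neighbor in all of $G$.

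Next I would locate the neighbors of $v$ more precisely. By Lemma \ref{lem: K white and yellow}(2), $X \cup S$ is a dissociation set, so $v$ has degree at most one in $G[X \cup S]$; on the other hand, $v \in S_2 = S \setminus S_1$ means that this degree is not $1$, so it must be $0$. Hence $v$ has no neighbors in $X \cup S$. Inside $V(K)$ the only neighbors of $v$ are the vertices of $R$, which are white by hypothesis. Thus every non-white neighbor of $v$ lies in $V(G_0)$, and in particular every non-white vertex of $V(C)$ contributes to the single non-white neighbor of $v$.

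Finally I would apply Lemma \ref{lem: K white and yellow}(4) to the bipartite component $C$: the coloring of $V(C)$ is one of (a) $V(C) \subseteq B$, (b) $U_C \subseteq W$ with $V_C \subseteq Y$, or (c) $U_C \subseteq Y$ with $V_C \subseteq W$. Case (a) is impossible, since $|V(C)| \ge 2$ would force $v$ to have at least two non-white neighbors. In case (b), $v$ is adjacent to every vertex of $V_C \subseteq Y$, so $|V_C| \le 1$; as $C$ is connected with $|V(C)| \ge 2$, both sides of its bipartition are non-empty, giving $|V_C| = 1$. Case (c) is symmetric and yields $|U_C| = 1$. I do not foresee a serious obstacle here; the only delicate point is verifying that all non-white neighbors of $v$ lie in $V(G_0)$, which is exactly what the dissociation property and $R \subseteq W$ guarantee.
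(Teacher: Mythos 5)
Your proof is correct and rests on essentially the same observation as the paper's: the type II condition supplies a yellow vertex $v\in S_2$ adjacent to every vertex of $C$, so at most one vertex of $C$ can be non-white, which is incompatible with $|U_C|\ge 2$ and $|V_C|\ge 2$. The paper finishes directly (two adjacent white vertices would arise), while you route the final step through the case analysis of Lemma~\ref{lem: K white and yellow}; your intermediate verification that all non-white neighbors of $v$ lie in $V(G_0)$ is harmless but unnecessary, since ``at most one non-white neighbor in $C$'' already follows from $v$ being yellow.
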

	\begin{proof}
		Suppose, towards a contradiction, that $|U_C|\ge 2$ and $|V_C|\ge 2$. Since $C$ is a connected component of $G_0$ of type II, there exists a vertex $v\in S_2\subseteq S$ adjacent to every vertex in $C$. Since $v$ is a yellow vertex, it has at most one non-white neighbor in $C$. As a result, $C$ has two adjacent white vertices, leading to a contradiction. Therefore, $|U_C|=1$ or $|V_C|=1$.
	\end{proof}
	Let $Z$ denote the set of vertices with degree zero in $G_0$ and no neighbor in $S_1\cup X_1$. Suppose $\pi$ is a valid $3$-coloring of $G$, and let $S_2^\pi$ (resp. $X_2^\pi$) be the subset of vertices $v$ in $S_2$ (resp. $X_2$) such that $\pi(u)=w$ for every $u\in N_{G_0}(v)$ with $d_{G_0}(u)\ge 1$. We define $H_\pi$ as the induced bipartite subgraph $G[S_2^\pi \cup X_2^\pi \cup Z]$ of $G$. The following lemma addresses the trivial connected components of $G_0$.
	\begin{lemma}\label{lem: optimum coloring}
		Let $G$ be a connected $P_6$-free graph that contains a dominating (not necessarily induced) complete bipartite subgraph $K$, with $R\subseteq W$ and $S\subseteq Y$. Then, $G$ admits a valid $3$-coloring $\pi$ such that each connected component $C$ of $H_\pi$ has a valid $3$-coloring in which the vertices of $V(C)\cap S_2^\pi$ are assigned the yellow color. Moreover:
		\begin{enumerate}
			\item Such a $3$-coloring of $C$ is valid if and only if $V(C)\cap (S_2^\pi\cup X_2^\pi)$ can be partitioned into neighborhoods of non-white vertices in $V(C)\cap Z$, and for each $z\in V(C)\cap Z$, the vertex $z$ is yellow if and only if $|N_C(z)|=1$. Hence, any PED-set of $C$ contains exactly $|V(C)\cap (S_2^\pi\cup X_2^\pi)|$ edges.
			\item $\pi$ corresponds to a DIM if and only if $X_2^\pi=\emptyset$ and for every vertex $s\in S_2^\pi$, its unique non-white neighbor $z_s$ has degree 1 in $H_\pi$.
			\item Some vertex $z$ of $V(C)\cap Z$ has degree $|V(C)\cap (S_2^\pi\cup X_2^\pi)|$ in $C$. A valid $3$-coloring can be obtained by assigning the white color to all vertices of $V(C)\cap Z$ except $z$.
			\item Other valid $3$-colorings can be obtained by assigning the white color to $z$ and considering the valid 3-colorings of $C - \{z\}$ (note that $C - \{z\}$ may be disconnected). 
		\end{enumerate}
	\end{lemma}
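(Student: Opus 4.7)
My plan is to construct $\pi$ from any valid $3$-coloring consistent with $R \subseteq W$ and $S \subseteq Y$, and then prove the four enumerated properties in order. For the construction, I would apply Lemma~\ref{lem: K white and yellow}: the colors of $R$ (white), $S$ and $X$ (yellow) are already forced, each non-trivial connected component $C_0$ of $G_0$ admits one of three valid options (all black, or one of two two-colored orientations when bipartite), and I would choose these options to be compatible with the partner constraints of $S_1 \cup X_1$-vertices and, whenever feasible, to place the partner of every $X_2$- or $S_2$-vertex outside $Z$. After this, the remaining undetermined colors lie exactly inside $H_\pi$, so the problem reduces to coloring each component $C$ of $H_\pi$.

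\textbf{Statements 1 and 2.} For statement~$1$, each $v \in V(C) \cap (S_2^\pi \cup X_2^\pi)$ is yellow and its unique non-white partner is forced into $V(C) \cap Z$: its $R$-neighbors are white, its $X \cup S$-neighbors are excluded by the dissociation property (from $v \in S_2$ or $v \in X_2$), and its high-$G_0$-degree neighbors are white by the definition of the starred sets. Hence the non-white $Z$-vertices of $V(C)$ partition $V(C) \cap (S_2^\pi \cup X_2^\pi)$ by their $C$-neighborhoods; a vertex $z \in V(C) \cap Z$ of $C$-degree at least two must be black (having multiple yellow $C$-neighbors rules out yellow, and being the partner of a yellow vertex rules out white), while a $z$ of $C$-degree one may be yellow or white. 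Counting, exactly $|V(C) \cap (S_2^\pi \cup X_2^\pi)|$ PED-edges arise inside $C$. Statement~$2$ is immediate from statement~$1$: a DIM forbids black, so every non-white $Z$-vertex of $V(C)$ must have $H_\pi$-degree one, which in particular yields the degree-one condition on each $z_s$; the condition $X_2^\pi = \emptyset$ is then inherited from the construction of $\pi$, which (whenever a DIM exists) has already placed the partner of every $X_2$-vertex inside a non-trivial two-colored bipartite component of $G_0$ rather than in $Z$.

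\textbf{Statements 3 and 4; main obstacle.} Statement~$3$ is the crux of the argument, and I expect it to be the main obstacle. I would prove by contradiction using $P_6$-freeness of $G$ that some $z \in V(C) \cap Z$ is adjacent in $C$ to every element of $V(C) \cap (S_2^\pi \cup X_2^\pi)$. If no such $z$ exists, take a shortest alternating path $z_0, v_1, z_1, v_2$ inside $C$ with $z_0 \not\sim v_2$; combining this with a suitable $r \in R$ (adjacent to $v_1$ and $v_2$ when they lie in $S$, with an adapted argument when they lie in $X$) and a second auxiliary vertex ($r' \in R$ or another $Z$-vertex of $C$), one extracts an induced $P_6$ in $G$, contradiction. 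The necessary non-edges hold: $r$ is non-adjacent to $z_0, z_1$ because $V(G_0) \cap N(R) = \emptyset$, and the non-consecutive path vertices are non-adjacent by the shortest-path choice together with the dissociation of $X \cup S$. This universal $z$ yields a valid $3$-coloring by setting $z$ black (or yellow when $|V(C) \cap (S_2^\pi \cup X_2^\pi)| = 1$) and all other $Z$-vertices of $C$ white. Statement~$4$ is then a direct recursion: coloring the chosen $z$ white reduces the problem to $C - \{z\}$, whose connected components inherit the same $H_\pi$-bipartite structure, so the same analysis applies recursively. The main difficulty will be the $P_6$-extraction in statement~$3$: choosing the correct $R$-vertex, verifying all induced non-edges, and handling the cases where $v_1$ or $v_2$ lies in $X$ rather than $S$.
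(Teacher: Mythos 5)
Your argument for Statement 3 has a genuine gap, and it is precisely at the step you identified as the crux. The six vertices you propose, $z_0,v_1,z_1,v_2,r$ plus one auxiliary vertex, cannot induce a $P_6$ in the main case $v_1,v_2\in S$: since $K$ contains $K_{p,q}$ as a spanning subgraph, every $r\in R$ is adjacent to \emph{both} $v_1$ and $v_2$, so the set $\{v_1,z_1,v_2,r\}$ already induces a $4$-cycle ($v_1z_1$, $z_1v_2$, $v_2r$, $rv_1$), and no induced path can contain it; equivalently, $v_1$ has degree at least $3$ in the subgraph induced by your chosen vertices. The whole difficulty of this statement is that $R$-vertices dominate all of $S$ and hence cannot serve as pendant extensions of an alternating path in $H_\pi$. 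The paper does not attempt such a direct extraction: it observes that each component $C$ of $H_\pi$ is itself a connected bipartite $P_6$-free graph whose $(S_2^\pi\cup X_2^\pi)$-side is entirely yellow in (the restriction of) a valid $3$-coloring, and then invokes Lemma~\ref{lem: induced bipartite w-y}, whose proof applies Theorem~\ref{p6-free struct} to the bipartite graph \emph{itself} (obtaining a dominating induced $C_6$ or a dominating complete bipartite subgraph inside $C$) before extracting a $P_6$ from that internal structure. To repair your proof you would essentially have to reproduce that argument; the auxiliary vertices must come from inside $C$, not from $R$.

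Two smaller remarks. In Statement 2, the direction ``$X_2^\pi\neq\emptyset$ implies $\pi$ is not a DIM'' is not something you can defer to ``the construction of $\pi$'': it must hold for the given coloring. The paper's argument is that the unique non-white partner $z_x\in Z$ of any $x\in X_2^\pi$ also has a neighbor in $S\subseteq Y$ (because $S$ dominates $V(G_0)$), so $z_x$ cannot be yellow and must be black, whence $B\neq\emptyset$. Your treatment of Statements 1 and 4 matches the paper's (both regard them as immediate), and your reduction of the problem to the components of $H_\pi$ is the same as the paper's; only the key existence claim in Statement 3 is unsupported as written.
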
	
	\begin{proof}
		The necessary and sufficient conditions follow immediately, as do Statements 1 and 2. It is worth mentioning that, if $X_2^\pi \neq \emptyset$, then there exists a vertex $z \in Z$ with $\pi(z) = b$, since $z$ has at least one neighbor in $S$.
		
		Let $\pi$ be a valid $3$-coloring for $G$. Hence, $H_\pi$  has a valid $3$-coloring $\pi'$ defined as $\pi$ restricted to $V(H_\pi)$; i.e, $\pi'(v)=\pi(v)$ for every vertex $v\in V(H_\pi$). By Lemma~\ref{lem: induced bipartite w-y}, $H_\pi$ has a vertex $z^*\in Z$ adjacent to each vertex of $S_2^\pi$. Therefore, if $\pi$ is an optimum valid $3$-coloring of $G$, $\pi^*$ defined as follows: $\pi^*(u)=\pi(u)$, for each $u\in G-Z$, $\pi^*(z^*)=b$ whenever $|N(z^*)\cap S_2'|\ge 2$, $\pi^*(z^*)=y$ whenever $|N(z^*)\cap S_2'|= 1$, and $\pi^*(z)=w$ for every $z\in Z\setminus\{z^*\}$ turns into a valid $3$-coloring for $G$. 
		
		Statement 4. follows from the fact that if for a vertex $v$ is assigned white in valid $3$-coloring, the $3$-coloring $\pi$ restricted to $V(G-\{v\})$ is also valid.
	\end{proof}
	\subsubsection{$R\cap B\neq\emptyset$ and $S\cap B=\emptyset$}
	Now, we will examine $P_6$-free graphs having a valid $3$-coloring and a dominating (not necessarily induced) complete bipartite subgraph $K$ with at least one black vertex.
	
	We begin by analyzing the valid partial $3$-coloring of $K$ in which exactly one of the sets of the bipartition contains black vertices. 
	
	Let us assume, without losing generality, that $R\cap B\neq\emptyset$. If $|R\cap B|\ge 2$, then there exists a black vertex in $S$, contradicting the fact that $S\cap B=\emptyset$. Therefore, $|R\cap B|=1$. On the other hand, $S\cap W=\emptyset$, since $R\cap B\neq\emptyset$ and the black vertices have no white neighbors. Hence, $S\subseteq Y$.
	\begin{lemma}\label{lem: one black vertex}
		Let $G$ be a graph with a valid $3$-coloring $(B,Y,W)$. Suppose that $G$ has a dominating (not necessarily induced) complete bipartite subgraph $K$ with a bipartition $\{R,S\}$ such that $R\cap B=\{r\}$ and $S\subseteq Y$. Then the following conditions hold:
		\begin{enumerate}
			\item $S$ is an independent set.
			\item $R\setminus\{r\}\subseteq W$, and $R$ is an independent set. 
			\item Let $M$ be the set of vertices in $G-V(K)$ that have at least one neighbor in $S$. Then $M\subseteq W$, and $R\cup M$ is an independent set.
			\item 
			Let $N=V(G)\setminus(V(K)\cup M)$, and define $d^N(x)$ as the degree of a vertex $x$ in the induced subgraph $G[N]$. Then $N\subseteq (B\cup Y)$ and: 
			\begin{enumerate}
				\item If $d^N(x)\ge 2$, then $x\in B$ and $x$ has exactly one neighbor in $K$  which is $r$.
				\item If $d^N(x)=0$, then $x\in Y$ and $x$ has $r$ as neighbor.
				\item If $d^N(x)=1$ and $x$ has at least two neighbors in $R\cup M$, then $x\in Y$ and its neighbors in $R$ are white vertices; i.e., $x$ is non-adjacent to $r$.
			\end{enumerate}				
		\end{enumerate}
	\end{lemma}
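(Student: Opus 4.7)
The plan is to verify the four conclusions in order, each time exploiting the defining features of a valid $3$-coloring --- namely that $W$ is independent, every yellow vertex has exactly one non-white neighbor, and every black vertex has no white neighbor and has degree at least two --- together with the key fact that $r\in B$ is adjacent to every vertex of $S$ by the completeness of $K$.

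For statement (1) the strategy is: any $s\in S\subseteq Y$ already has $r$ as its unique non-white neighbor, so two adjacent vertices of $S$ would each contribute an extra non-white neighbor to the other, an immediate contradiction. For (2), assuming some $r'\in R\setminus\{r\}$ is yellow, the completeness of $K$ puts every (yellow) vertex of $S$ into $N(r')$, forcing $|S|\le 1$; the case $|S|=1$ is excluded because the unique $s\in S$ would then have both $r$ and $r'$ as non-white neighbors. Independence of $R$ follows at once: an edge inside $R\setminus\{r\}$ would join two whites, and any edge incident to $r$ within $R$ would give the black vertex a white neighbor. For (3), any $x\in M$ has some yellow neighbor $s\in S$ whose only non-white neighbor is $r$, so $x\in W$; independence of $R\cup M$ is then immediate, since only $r$ fails to be white and $r$ cannot see any white vertex.

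The main work is in (4). I would first observe that every $x\in N$ is dominated by $K$ but has no neighbor in $S$, so it has a neighbor in $R$; and since only $r$ is non-white in $R$, a hypothetical white $x$ would require one of its $R$-neighbors to be yellow, which is impossible. This gives $N\subseteq B\cup Y$. The three sub-claims then become case analyses. If $d^N(x)\ge 2$, each $N$-neighbor of $x$ lies in $B\cup Y$ and is therefore non-white, so $x$ must be black; the black condition forbids white neighbors, confining $x$'s $R$-neighbors to $\{r\}$ and forcing $r\in N(x)$ by domination. If $d^N(x)=0$, a black $x$ would have all its non-white neighbors inside $R$ (the $M$-neighbors being white), but only $r$ is available there; hence $x\in Y$ and its unique non-white neighbor must be $r$. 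If $d^N(x)=1$ and $x$ has at least two neighbors in $R\cup M$, a black $x$ could again admit only $r$ as a non-white contact in $R\cup M$, contradicting the two-neighbor hypothesis; thus $x\in Y$, and because its single allowed non-white neighbor already lies in $N$, it cannot also be adjacent to $r$, so its $R$-neighbors lie in $R\setminus\{r\}\subseteq W$.

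The conceptual content is light, so the main obstacle is bookkeeping rather than insight: everything rests on the single invariant that, outside of $r$, the set $R\cup M$ is entirely white and therefore supplies at most one admissible non-white contact to any vertex of $N$. Once this invariant is isolated, each sub-case of (4) reduces to reconciling the count of non-white neighbors of $x$ with its color and degree constraints.
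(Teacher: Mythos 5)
Your proposal is correct and follows essentially the same route as the paper: the first three items are read off directly from the defining constraints of a valid $3$-coloring (white independence, yellow vertices having exactly one non-white neighbor, black vertices having no white neighbor and degree at least two), and item (4) is handled by the same case analysis on $d^N(x)$, using the invariant that $r$ is the only non-white vertex of $R\cup M$. Your write-up is merely more explicit than the paper's terse proof, which dismisses items (1)--(3) as immediate.
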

	\begin{proof}
	  The first three statements follow directly from the definition of a valid $3$-coloring.
	  	  
	  Since each vertex $x\in N$ is adjacent to at least one vertex in $R$, it follows from Statement~2 that $x$ is a non-white vertex. If $d^N(x)\ge 2$, then $x\in B$, and $N_K(x)=\{r\}$. If $d^N(x) = 0$, since $N_G(x)\cap V(K)\subseteq R$, then $x\in Y$ and must be adjacent to $r$. Now suppose that  $d^N(x)=1$ and $x$ has at least two neighbors in $R\cup M$. Then $x$ has at least one white neighbor in $R\cup M$, and one non-white neighbor in $N$, which implies that $x$ must be a yellow. In particular, $x$ cannot be adjacent to $r$.
	\end{proof}
	\begin{rmk}
	        It is always advisable to choose the one that has the lowest degree in $G$ to obtain a smaller PED-set. Once $r$ is chosen, the valid 3-coloring can be completed as follows: the black vertices of $N$ are exactly neighbors of $r$ and have degree at least one in $G[N]$.
	\end{rmk}
	\subsubsection{$R\cap B\neq\emptyset$ and $S\cap B\neq\emptyset$}
	Now, let us consider $P_6$-free graphs having a dominating complete bipartite subgraph $K$ with a partial $3$-coloring where both sets of the bipartition contain black vertices. It is important to note that in this case $V(K)$ does not have any white vertices.
	\begin{lemma}\label{lem: star - black vertices}
		Let $G$ be a connected $P_6$-free graph with a valid $3$-coloring $(B,Y,W)$. If $G$ has a dominating complete bipartite subgraph $K$ such that $R\cap B\neq\emptyset$ and $S\cap B\neq\emptyset$, then one of the following conditions holds:
		\begin{enumerate}
			\item Each vertex of $K$ is black; or
			\item $K$ contains $K_{1,t}$ as a spanning tree for some positive integer $t$.
		\end{enumerate}	
	\end{lemma}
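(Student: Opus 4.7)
The plan is a short case analysis driven by the two structural constraints on a valid $3$-coloring: a black vertex has no white neighbor, and a yellow vertex has exactly one non-white neighbor. The key leverage is that $K$, being a complete bipartite (spanning) subgraph on $R \cup S$, forces every vertex of $R$ to be adjacent in $G$ to every vertex of $S$.

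First I would argue that $V(K) \cap W = \emptyset$. Pick $r \in R \cap B$ and $s \in S \cap B$ (these exist by hypothesis). Since $r$ is black and adjacent in $G$ to every vertex of $S$, no vertex of $S$ is white. Symmetrically, since $s$ is black and adjacent to every vertex of $R$, no vertex of $R$ is white. Hence every vertex of $V(K)$ is either black or yellow.

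Next, assume condition 1 of the lemma fails, so some vertex of $V(K)$ is yellow. By the symmetry between $R$ and $S$ in the preceding paragraph, I may suppose without loss of generality that there is a yellow vertex $s' \in S \cap Y$. Because $s'$ is adjacent in $G$ to every vertex of $R$, and every vertex of $R$ is non-white, the yellow vertex $s'$ has at least $|R|$ non-white neighbors. Validity of the coloring forces $|R| = 1$. Writing $R = \{r_0\}$, the edges of $K$ joining $r_0$ to each vertex of $S$ form a spanning $K_{1,t}$ of $K$ with $t = |S|$, which is a spanning tree. This establishes condition 2.

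There is no real obstacle here; the only care needed is to invoke the complete bipartite structure of $K$ correctly (so that yellow/black vertices in one side really do see every vertex in the other) and to note that the symmetric case where the yellow vertex lies in $R$ gives $|S| = 1$, producing a spanning $K_{1,|R|}$ in the same way.
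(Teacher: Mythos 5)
Your proof is correct and follows essentially the same route as the paper's: the black vertices on each side of the bipartition rule out white vertices in $V(K)$, so a non-black vertex of $K$ must be yellow, and a yellow vertex in one side forces the other side to be a singleton, yielding the spanning $K_{1,t}$. Your write-up is in fact slightly more careful than the paper's, which derives the contradiction from ``$|R|\ge 2$ and $|S|\ge 2$'' when only $|R|\ge 2$ is needed once the yellow vertex is placed in $S$.
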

	\begin{proof}
		If condition 1. holds, then there is nothing to prove. Suppose that there exists a non-black vertex $v$ in $V(K)$. Then $v$ is yellow, as $R\cap B\neq\emptyset$ and $S\cap B\neq\emptyset$. Suppose, without loss of generality, that $v\in S$. If $|R|\ge 2$ and $|S|\ge 2$, then $v$ has at least two non-white neighbors in $R$, leading to a contradiction. The contradiction arises from assuming that $|R|\ge 2$ and $|S|\ge 2$. Hence $|R|=1$ or $|S|=1$, and consequently, $K$ contains $K_{1,t}$ as a spanning tree for some positive integer $t$.
	\end{proof}
	If each vertex of $K$ is black, then the unique PED-set is the trivial one. Now, we assume that $K$ is a maximal $K_{1,t}$ dominating complete bipartite subgraph. 
	
	By $G_0$, we denote the subgraph $G-V(K)$. Let $C$ be a connected component of $G_0$. If $C$ is bipartite, we denote the bipartition of $C$ with $\{U_C,V_C\}$. Let us assume, without loss of generality, that $|R|=1$, with $R=\{r\}$. 
	
	We use $S_1$ to denote the set of vertices $v$ of $S$ such that $N^S(v)\neq\emptyset$. Set $S_2=S\setminus S_1$. By connected component of $G_0$ of \emph{type $\alpha$}, we mean those non-trivial connected components $C$ of $G_0$ such that there exists a vertex $v\in S$ with at least one neighbor and at least one non-neighbor in $V(C)$. A non-trivial connected component that is not of type $\alpha$ is called of \emph{type $\beta$}.
	\begin{lemma}\label{lem: several black}
		Let $G$ be a connected $P_6$-free graph with a maximal dominating complete bipartite subgraph $K$, having a bipartition $\{R=\{r\},S=N(r)\}$, and let $t=|S|$. Suppose that $G$ admits a valid $3$-coloring $\pi$ such that $\pi(r)=b$ and $S\cap B\neq\emptyset$. Then the following conditions hold:
		\begin{enumerate}
			\item If $t=1$, then $W=\emptyset$. \label{lem14: item 1}
			\item $S_1\subseteq B$. \label{lem14: item 2}
			\item\label{lem14: item 3} Assume $t\ge 2$ and that every connected component of $G_0$ is trivial. Consider the induced subgraph $G-\{r\}$, and let $Z=V(G)\setminus V(K)$, $Z'=\{z\in Z: d_G(z)=1\}$, and $Z''=Z\setminus Z'$. Then: 
			\begin{itemize}
				\item Every connected component $C$ of $G-\{r\}$ that contains a vertex from $S_1$ contains no white vertex; i.e., $V(C)\cap (S\cup Z'')\subseteq B$ and $V(C)\cap Z'\subseteq Y$.
				\item Every trivial connected component $C$ of $G-\{r\}$ consists of a single vertex $s\in S_2$, and in that case, $s\in Y$.
				\item Any other connected component $C$ of $G-\{r\}$ can be colored independently of the others, choosing one of the following options:
				\begin{enumerate}
					\item $V(C)\cap (S\cup Z'')\subseteq B$ and $V(C)\cap Z'\subseteq Y$.
					\item $V(C)\cap S\subseteq Y$ and $V(C)\cap Z\subseteq W$
				\end{enumerate}
			\end{itemize}
			Additionally, if no connected component chooses option (b), then $W=\emptyset$. Moreover, if $S_1=\emptyset$ and every non-trivial component chooses option (b), then the resulting coloring is valid and uses fewer edges than any other combinations. However, $S\cap B=\emptyset$ and the hypothesis of the lemma is not satisfied.
			\item ~\label{lem14: item 4}
			\begin{enumerate}			
				\item If some connected component $C$ of $G_0$ has a black vertex, then all vertices in $C$ are black.
				\item $V(C)\subseteq B$ for every non-bipartite connected component $C$, and for every connected component $C$ of type $\beta$.
				\item Let $C$ be a connected component of $G_0$ of type $\alpha$, with two adjacent vertices $c$ and $d$, and a vertex $v\in S$ such that $v$ is adjacent to $c$ and non-adjacent to $d$. If $u\in S_2$ is a vertex with at least one neighbor in $V(G_0)\setminus V(C)$, then either $N^{V(G_0)}(v)\cap N^{V(G_0)}(u)\neq\emptyset$, or $u$ is adjacent to $d$. Furthermore, if $|N^S(c)|\ge 2$ and $|N^S(d)|\ge 2$ or if $C$ is non-bipartite, then $V(C)\subseteq B$.
				\item If $C$ is a connected component of $G_0$ of type $\alpha$ and $V(C)\subseteq B$ then $\pi(v)\neq w$, for all $v\in V(G)$. 
			\end{enumerate}
		\end{enumerate}
	\end{lemma}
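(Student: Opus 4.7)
The plan is to tackle the four items in order, relying on two structural observations that follow from the standing hypotheses. First, since $K$ is maximal with $R=\{r\}$, every $z\in V(G_0)$ is non-adjacent to $r$ and has at least one neighbor in $S$; otherwise $z$ could be added to the $S$-side of $K$. Second, every $s\in S$ is adjacent to $r\in B$ and is therefore non-white, so $S\subseteq B\cup Y$.

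Items~\ref{lem14: item 1} and~\ref{lem14: item 2} should follow quickly. For item~\ref{lem14: item 1} the hypothesis forces the single $s\in S$ to lie in $B$, and then $\{r,s\}\subseteq B$ dominates $V(G)$, so no vertex can be white. For item~\ref{lem14: item 2} any $v\in S_1$ has both a neighbor in $S$ and the neighbor $r$, both non-white, giving $v$ at least two non-white neighbors; combined with $v$ being itself non-white this forces $v\in B$.

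For item~\ref{lem14: item 3}, I will use that triviality of every component of $G_0$ makes $Z$ independent, so $Z'$ and $Z''$ correspond to $z\in Z$ having exactly one versus at least two neighbors in $S$. A trivial component of $G-\{r\}$ must be an isolated $s\in S_2$, and since $d_G(s)=1<2$ it can only be yellow. If a non-trivial component $C$ contains a vertex of $S_1$, item~\ref{lem14: item 2} supplies a black seed, which I propagate along $C$: every $Z''$-vertex of $C$ already has at least two non-white $S$-neighbors so it cannot be yellow, and with any black neighbor it becomes black; every $S$-vertex of $C$ meets $r$ together with a black $C$-neighbor and is likewise forced black; every $Z'$-vertex of $C$ has its unique neighbor black and degree one, hence must be yellow. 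For a non-trivial $C$ disjoint from $S_1$, option~(a) replays this propagation, while option~(b) assigns $r$ as the unique non-white neighbor of every $s\in V(C)\cap S\subseteq S_2$, pushing all $Z$-neighbors of $s$ into $W$; both options satisfy the validity conditions and can be chosen independently across components, after which the minimality remark and the closing sentence about $S\cap B=\emptyset$ are immediate.

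Item~\ref{lem14: item 4} is the hardest, and the common engine will be an induced $P_6$ on a template $z,u,r,v,c,d$. For (a), a neighbor $u$ of a black $b\in V(C)$ is non-white with a non-white $S$-neighbor, hence has at least two non-white neighbors; since also $d_G(u)\ge 2$, $u\in B$, and induction on distance inside $C$ finishes. For the non-bipartite case of (b), any yellow $Z$-vertex has its unique non-white neighbor in $S$, so all its $C$-neighbors are white; combined with $W$ being independent this gives a proper $2$-coloring of $C$, impossible on an odd cycle. For type~$\beta$, the $S$-neighbor of any $v\in V(C)$ is adjacent to every vertex of $V(C)$, so if $v$ is not black that $S$-vertex is yellow with $r$ as its unique non-white neighbor, collapsing $V(C)$ to $W$ and contradicting $|V(C)|\ge 2$. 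In~(c), after fixing $c,d,v,u$ and $z\in N^{V(G_0)}(u)\setminus V(C)$, maximality of $K$ and the separation of $G_0$-components eliminate every would-be chord of the path $z,u,r,v,c,d$ except $zv$, $uc$ and $ud$; the first two vanish precisely when $u,v$ share no neighbor in $V(G_0)$, while $ud$ is the alternative allowed by the conclusion. The strengthening when $|N^S(c)|,|N^S(d)|\ge 2$ combines part~(a) with the observation that a yellow $c$ or $d$ in a bipartite $C$ would have to absorb two non-white $S$-neighbors. Finally, (d) is a single $P_6$-trap: a hypothetical white $w_0$ has a yellow $S$-neighbor $s^*$; since $s^*$ must lie in $S_2$ (otherwise item~\ref{lem14: item 2} places it in $B$), the one dangerous chord $s^*v$ disappears and $w_0,s^*,r,v,c,d$ becomes an induced $P_6$. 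The main obstacle throughout is marshalling the coloring constraints on $V(K)$ together with the maximality of $K$ at exactly the right moment, so that the required induced $P_6$ or the final local forcing actually surfaces.
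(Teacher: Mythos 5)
Your proposal is correct and follows essentially the same route as the paper: items 1--3 via the forced colors on $S$ and black-propagation from $S_1$ seeds through the components of $G-\{r\}$, and item 4 via the black-propagation / white--yellow $2$-coloring arguments of Lemma~\ref{lem: K white and yellow} and the induced-$P_6$ templates through $r$ from Lemma~\ref{lem: components of type I} (the paper's own proof of item 4 simply defers to those lemmas). A few steps are compressed --- e.g.\ in the type-$\beta$ case you should also dispose of the possibility that the dominating $S$-vertex is black rather than yellow, and in 4(d) the chords $w_0v$, $s^*c$, $s^*d$ need the one-line checks that $v\in B$ and that $s^*$'s unique non-white neighbor is $r$ --- but these are routine and do not affect correctness.
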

	\begin{proof}
		If $t=1$ and $S\cap B\neq\emptyset$, then $S\cap B=\{s\}$. Hence, each vertex in $V(G)\setminus V(K)$, that is a neighbor of $s$, has a non-white color, leading to $W=\emptyset$. This proves Statement \ref{lem14: item 1}.
		
		Statement~\ref{lem14: item 2} trivially holds.
		
		Note that each vertex in $Z'$ is adjacent to only one vertex in $S$, so $Z\subseteq Y\cup W$
		 Let $C$ be a connected component of $G-\{r\}$ containing a vertex $v\in S_1$. By Statement~\ref{lem14: item 2}, $v$ is black, and each of its neighbors in $V(C)\setminus S$ must be non-white. Furthermore, any vertex $u$ in $Z''$ with a black neighbor is also black, as is any vertex $u\in S_2$. These observations imply that if $C$ contains a vertex from $S_1$, then $V(C)\cap (S\cup Z'')\subseteq B$ and $V(C)\cap Z'\subseteq Y$. It is also clear that any trivial connected component of $G-r$ consists of a single vertex from $S_2$.
		 
		 A similar argument applies to the other components. If at least one vertex in $V(C)\cap (Z''\cup S)$ is black, then all vertices in $V(C)\cap (Z''\cup S)$ must also be black, and all vertices in $V(C)\cap Z'$ are yellow. Otherwise, all vertices in $V(C)\cap S$ are yellow, and all vertices in $V(C)\cap Z$ are white. 
		
		Statement~4 can be proved using arguments analogous to those used in Lemmas~\ref{lem: K white and yellow} and~\ref{lem: components of type I}.
	\end{proof}
	
	\section{Unweighted minimum PED}\label{sec: the algorithm}
	In this section, we present a cubic-time algorithm for solving the unweighted minimum PED-problem on $P_6$-free graphs. The algorithm relies on Theorem~\ref{p6-free struct}. Moreover, if a dominating $C_6$ or a dominating complete bipartite is provided, the algorithm runs in linear-time.
	
	If a graph $G$ has a DIM $D\subseteq E(G)$, then $D$ is a minimum PED-set (see,~\cite{Lu-Ko-Tang-2002}). 
%

	We assume, without loss of generality, that the input graph $G=(V(G),E(G))$ is a connected $P_6$-free graph, where $n=|V(G)|$, and $m=|E(G)|$.
	
	\vspace{0.25cm}
	{\bfseries Main algorithm}
	\begin{enumerate}
		\item Run the $O(n+m)$-time algorithm from~\cite{Br-Mo-2014}, noting that every $P_6$-free graph is a $P_7$-free graph. If the algorithm finds a DIM $D\subseteq E(G)$ of $G$, then $D$ is the optimal solution. The subsequent steps assume that $G$ is a DIM-less graph and proceed to iteratively construct iteratively PED-sets, keeping the track of the best one found so far. The best current PED-set of $G$ is denoted by $\mathcal{P}$. 		
		\item Set $\mathcal{P}:=\{E(G)\}$ (the trivial PED-set). 
		This operation requires $O(m)$-time.
		\item Apply the algorithm of Theorem \ref{p6-free struct} 
		\cite{DBLP:conf/cocoon/HofP08,DBLP:journals/dam/HofP10} to $G$. One of the following structures will be returned in $O(n^3)$ time:
		\begin{itemize}
			\item A dominating induced cycle $C=(v_1,v_2,v_3,v_4,v_5,v_6)$ with six vertices. Proceed to step 4.
			\item A dominating complete bipartite subgraph $K$ with a bipartition $\{R,S\}$. We can assume that $K$ is maximal with additional $O(n+m)$-cost and proceed to step 6.
		\end{itemize}
		\item For each possible partial $3$-coloring of $C$ of types (b), (c) and (e), attempt to extend it to a total $3$-coloring of $G$ that corresponds to a proper PED-set according to Theorem~\ref{thm: dominating cycle}. If the extension is valid, compare it with the best current PED-set $\mathcal{P}$ and update $\mathcal{P}$ if the new one is better. Validity checking of a total $3$-coloring requires $O(m+n)$-time. 
		
		The number of possible total $3$-colorings is $O(1)$ except those of type (e). Since the number of possible colorings for each type (b) and (c) is $6$ (see Lemma~\ref{lem: coloreos del C_6} and Theorem~\ref{thm: dominating cycle}), each extension can be done in $O(m+n)$-time. Therefore, the generation and validation of these 12 total $3$-colorings can clearly be done in overall $O(m+n)$-time.

		Next, we analyze partial $3$-colorings of type (e) for $C$. There are exactly two such colorings. For each of them, one of the following situations occurs (see Theorem \ref{thm: dominating cycle}):
			\begin{enumerate}
				\item No valid total $3$-coloring can be extended.
				\item Exactly one valid total $3$-coloring can be extended, which corresponds to a DIM (i.e., $G[Y]$ is $1$-regular).
				\item Multiple total $3$-colorings can be extended ($G[Y]$ is not $1$-regular). If one of them is valid, then all of them are valid and correspond to a proper PED-sets. These PED-sets have the same number of edges, since the corresponding valid $3$-colorings share the same set of yellow vertices and exactly one black vertex with a specific degree. In this case, it suffices to consider any one of them.
			\end{enumerate}
Again, at most two significant total $3$-colorings (corresponding to type (e)) are generated and checked in $O(n + m)$ time.
		\item Return $\mathcal{P}$ and stop.
		\item Consider the following three configurations for $K$:
		\begin{enumerate}
			\item Configuration I: $K$ has a partial $3$-coloring with no black vertices. Perform the non-black-vertex sub-algorithm. See Subsection 4.2.1.
			\item Configuration II: $K$ has exactly one black vertex. Perform the one-black-vertex sub-algorithm. See Subsection 4.2.2.
			\item Configuration III: $K$ has at least two black vertices and is isomorphic to $K_{1,t}$. Perform the several-black-vertices sub-algorithm. See Subsection 4.2.3.
		\end{enumerate}
		\item Return $\mathcal{P}$ and stop.
	\end{enumerate}
	\vspace{0.25cm}
	{\bfseries Non-black-vertex sub-algorithm (Configuration I)}
	\vspace{0.25cm}
	
	The list $\Pi$ contains valid partial $3$-colorings found so far. At various stages, initial valid partial $3$-colorings are generated and stored in $\Pi$. Subsequent steps attempt to extend these partial $3$-colorings into valid total $3$-colorings of $G$, or determines that such an extension is not possible. 
	
	Note that, under the hypothesis of Lemma~\ref{lem: valid 3-coloring}, if the graph admits a non-trivial solution, then it also contains a DIM, which cannot occur, since after the first step of the main algorithm we assume that the input graph does not admit any DIM. Therefore, the second condition of Lemma~\ref{lem: dominating star} must hold, that is, $R\subseteq W$ or $S\subseteq W$.
	\begin{enumerate}	
		\item Set $\Pi:=\emptyset$.
		%
		%
		\item Assign white color to the vertices of $R$, and a yellow color to the vertices of $S$. Then, invert the coloring of each set in the bipartition (see Lemma \ref{lem: K white and yellow}), and repeat the following substeps for each of these two initial valid partial $3$-colorings:
		\begin{enumerate}	
			\item Let $X$ be the subset of vertices in $V(G)\setminus V(K)$ with at least one neighbor in $R$, and assign yellow color to vertices of $X$. 
			\item Check that $X\cup S$ is a dissociation set: compute $G[X\cup S]$, $S_1 := \{s \in S \mid d_{G[X\cup S]}(s)=1\}$, $S_2 := \{s \in S \mid d_{G[X\cup S]}(s)=0\}$, $X_1 := \{x \in X \mid d_{G[X\cup S]}(x)=1\}$ and $X_2 := \{x \in X \mid d_{G[X\cup S]}(x)=0\}$. If it fails (that is, $S\neq S_1\cup S_2$ or $X\neq X_1\cup X_2$), then skip to the next initial valid partial $3$-coloring of the step (this one cannot be extended to a valid total $3$-coloring of $G$).
			\item Compute the set $\mathcal{C}$ of connected components of $G_0=G-(V(K) \cup X)$.
			\item If a vertex $v\in S_2\cup X_2$ has $d^{V(G_0)}_G(v)=0$, then skip to the next initial valid partial $3$-coloring of the step (this one cannot be extended to a valid total $3$-coloring of $G$). See Remark \ref{rmk: invalid coloring}.			
			\item Assign black color to every vertex of each non-bipartite connected component $C\in\mathcal{C}$ (see Lemma~\ref{lem: K white and yellow}).
			\item For each $v\in S_1\cup X_1$, assign white color to each vertex $u\in N_G(v)\cap V(G_0)$, and assign a white/yellow coloring to  the bipartite connected component $C(u)$ of $u$ in $G_0$, compatible with $u$, whenever $u$ is uncolored. If some $u\in N(v)\cap V(G_0)$ was previously assigned a non-white color, skip to the next initial valid partial $3$-coloring of the step (this one cannot be extended to a valid total $3$-coloring of $G$).
			\item If all connected components of $\mathcal{C}$ are now colored, store the partial $3$-coloring in $\Pi$. Otherwise, let $C \in \mathcal{C}$ be an uncolored component of type I. In this case, there exists some vertex $v \in S_2$ that has a neighbor $c$ and a non-neighbor $d$ in $C$, such that $cd$ is an edge of $C$. If $d$ has a exactly one neighbor $v'$ in $S_2$, and $v'$ is adjacent to $c$, then assign yellow to each vertex in the partition class of $C$ containing $d$, and white to each vertex in the other class ($c$ has 2 yellow neighbors $v$ and $v'$ which implies that $c\notin Y$, as $\{v',c,d\}$ forms a triangle which implies that $c\notin B$, $c\in W$ and $d\in Y$). Repeat this procedure as long as components satisfying this condition remain. Otherwise, check the resulting partial $3$-coloring is valid (that is any yellow vertex has at most one colored non-white neighbor and the colored non-yellow vertices form an independent set)? If the check fails, then skip to the next initial valid partial $3$-coloring of the step (this one cannot be extended to a valid total $3$-coloring of $G$). Otherwise, consider up to three extensions of the current partial $3$-coloring as follows:
			\begin{enumerate}
				\item Assign black to all vertices of $C$. If there is some vertex $z\in S_2\cup X_2$ with $d^{V(C)}_G(z)\geq 2$ then skip to the next alternative (that partial $3$-coloring cannot be extended to a valid $3$-coloring of $G$). Otherwise, for each $z\in S_2\cup X_2$ with $d^{V(C)}_G(z)=1$, assign white color to each $u \in N(z)\cap (V(G_0)\setminus V(C))$, and assign a compatible white/yellow coloring to $C(u)$ compatible with $u$, whenever $u$ is uncolored. If any such $u$ was previously colored non-white (that partial $3$-coloring cannot be extended to a valid $3$-coloring of $G$) skip to the next alternative. Otherwise, store it in $\Pi$.
				
				
				%
				\item If $c$ has exactly one neighbor in $S_2$, do the following: assign yellow to $c$ and assign a compatible white/yellow coloring to $C(c)$ compatible with $c$. Then assign white to each vertex $y\in N(v)\cap (V(G_0)\setminus\{c\})$, and assign a white/yellow coloring to $C(y)$ whenever $y$ is uncolored. If any such $y$ was previously colored with a non-white color (that partial $3$-coloring cannot be extended to a valid $3$-coloring of $G$) skip to the next alternative. Otherwise, store the resulting partial $3$-coloring in $\Pi$. 
				\item If $d$ has exactly one neighbor $v'\in S_2$ (clearly, $c\notin N_G(v')$ and $v'$ is another indicator of $C$), do the following: assign yellow to $d$ and a compatible white/yellow coloring to $C(d)$ compatible with $d$. Then assign white to each vertex $y\in N(v')\cap (V(G_0)\setminus\{d\})$, and assign a compatible white/yellow coloring to $C(y)$ whenever $y$ is uncolored. If any such $y$ was previously assigned a non-white color (that partial $3$-coloring cannot be extended to a valid $3$-coloring of $G$) skip to the next initial valid partial $3$-coloring. Otherwise, store that partial $3$-coloring in $\Pi$.
				\end{enumerate}
			\end{enumerate}
		The time complexity of this step is $O(m+n)$. The list $\Pi$ has at most 6 partial $3$-colorings.

		See Lemma~\ref{lem: components of type I} and Remark~\ref{rmk: components of type I} for the correctness. Note also that, if $d$ has more than one neighbor in $S_2$ and all of them are adjacent to $d$, then it leads to an invalid $3$-coloring. Otherwise, by Remark~\ref{rmk: components of type I} there is no uncolored components of type I.
		\item If $\Pi = \emptyset$, this indicates that there is no non-trivial PED-set extension of $K$, and the algorithm should return to the main procedure.
		\item For each valid partial $3$-coloring of $\Pi$, repeat the following substeps.
		\begin{enumerate}
			\item For each non-trivial uncolored component $C$ (which must be of type II), note by Lemma~\ref{lem: type II} that $|U_C|=1$ or $|V_C|=1$. Select the first of the following statements that is true and take the corresponding actions:			
			\begin{enumerate}
				\item If $U_C=\{u\}$ and $d^S(u)=1$: we assign yellow to $u$ and white to each vertex in $V_C$.
				\item If $V_C=\{v\}$ and $d^S(v)=1$: we assign yellow to $v$ and white to each vertex in $U_C$.			
				\item Otherwise, discard that partial $3$-coloring.
			\end{enumerate}
			\item Compute $S^\pi_2$, $X^\pi_2$ and $Z$, where $\pi$ is the resulting partial $3$-coloring of (a). Consider the graph $H_\pi$. For each connected component $C$ of $H_\pi$ that contains some vertex in $S^\pi_2\cup X^\pi_2$, find a vertex $v$ that is adjacent to every vertex in $V(C)\cap (S^\pi_2\cup X^\pi_2)$. Assign the yellow color to $v$ if $d_C(v) = 1$, or the black color if $d_C(v) > 1$, and assign the white color to every other uncolored vertex in that component. If no such a vertex exists for some connected component, discard that partial $3$-coloring (see Lemma \ref{lem: optimum coloring}).
			\item If the resulting $3$-coloring is a valid total $3$-coloring, compare it with the current best PED-set $\mathcal{P}$ and update $\mathcal{P}$ if it is better.
		\end{enumerate}
	\end{enumerate}
	This sub-algorithm can be implemented in $O(m+n)$-time.

	\vspace{0.25cm}
	{\bfseries One-black-vertex sub-algorithm (Configuration II)}
	\vspace{0.25cm}
	
	Refer to Lemma \ref{lem: one black vertex} for the correctness of the following subroutine. Consider the bipartition $\{R,S\}$ of $K$ and apply the following procedure. Then, repeat it interchanging the roles between $R$ and $S$.
	\begin{enumerate}	
		\item Check that $R$ and $S$ are both independent sets. If it fails, then it is not possible to extend to a valid $3$-coloring.
		\item $M=\{v\in V(G)\setminus V(K): N^S_G(v)\ne\emptyset\}$ and check that $R\cup M$ is an independent set. Again, if it fails, then it is not possible to extend to a valid $3$-coloring. 
		\item $S\subseteq Y$ and $M\subseteq W$.
		\item $N=V(G)\setminus (R\cup S\cup M)$ and consider the degree of each vertex $x$ in the induced subgraph $G[N]$, i.e., $d^N(x)=|N_{G[N]}(x)|$.
		\begin{enumerate}
			\item If $d^N(x)\geq 2$, then $x\in B$, $x$ has exactly one neighbor in $K$ and this neighbor is $r\in R$ (the unique black vertex of $R$, and $R\setminus \{r\}\subseteq W$). 
			\item If $d^N(x)=0$, then $x\in Y$ and $x$ has $r$ as neighbor.
			\item If $d^N(x)=1$ and $x$ has at least two neighbors in $R\cup M$, then $x\in Y$ and $N^R_G(x)\subseteq W$ ($x$ is non-adjacent to $r$).
		\end{enumerate}
		Any vertex $r'$ of $R$ that have vertices of (a) and (b) as neighbors and vertices of (c) as non-neighbors can be selected as $r$ and complete a valid $3$-coloring as follow: $r\in B$, $R\setminus \{r\}\subseteq W$, and any neighbor (non-neighbor) $x$ of $r$ with $d^N(x)=1$ must be black (yellow). It is always advisable to choose $r'$ with lowest degree in $G$ as $r$ to obtain a smaller PED-set.
	\end{enumerate}
	This sub-algorithm runs in $O(m+n)$.
	
	\vspace{0.25cm}
	{\bfseries Several-black-vertices sub-algorithm (Configuration III)}
	\vspace{0.25cm}

	\begin{enumerate}	
		\item By Lemmas~\ref{lem: star - black vertices} and \ref{lem: several black}, we assume that the bipartition of $K$ is $\{R=\{r\},S=N_G(r)\}$ and $t=d_G(r)\geq 2$.
		\item Compute $S_1$, $S_2$, and all connected components of $G_0 = G - K$. If $S_1=\emptyset$ and $G_0$ contains only trivial connected components, then stop. In this situation, there is a better PED-set but it comes from a different configuration (Lemma~\ref{lem: several black}, item~\ref{lem14: item 3}).
		\item Set $X:=\emptyset$ to represent the subset of vertices that cannot be colored white.
		\item Assign the black color to $r$ and to all vertices in $S_1$.
		\item Assign the yellow color to the subset of vertices of $S_2$ such that do not have any neighbor in $G_0$.
		\item Let $Z$ be the set of isolated vertices of $G_0$.
		\item Add to $X$ every vertex $z\in Z$ such that $N(z)\cap S_1\neq\emptyset$.
		\item If all non-trivial connected components of $G_0$ are of type $\beta$, proceed as follows:
		\begin{enumerate}
			\item Assign the black color to every vertex in each non-trivial connected component.
			\item Assign the black color to every vertex $v\in S_2$ that has a neighbor in $X$ or in $B\setminus \{r\}$. For each such $v$, add all its uncolored neighbors to $X$. Apply this procedure iteratively until no more vertices of $S_2$ can be assigned as black vertices.
			\item Let $H$ be the subgraph induced by $S_2'$ (the uncolored vertices of $S_2$) and $Z\setminus X$. Compute its connected components. Any connected component $C$ can take two possible options for its vertices independently of other connected components:
			\begin{enumerate}
			\item $V(C)\cap S_2'\subseteq B$ and add $V(C)\setminus S_2'$ to $X$.
			\item $V(C)\cap S_2'\subseteq Y$ and $V(C)\setminus S_2'\subseteq W$.
			\end{enumerate}
			Any of these combinations can complete a valid $3$-coloring: assign either yellow or black to each vertex in $X$, depending on whether it has exactly degree one in $G-W$ or not. The corresponding PED-set is non-trivial if and only if some connected component $C$ has taken (ii) as option. For unweighted version problem, it is always convenient to choose option (ii) for every connected component. Consequently, $S_2'\subseteq Y$ and $Z\setminus X\subseteq W$.
			%
			%
			%
			\item Compare this PED-set with the current best PED-set $\mathcal{P}$ and update $\mathcal{P}$ if it is better.
		\end{enumerate}
		\item  If $G_0$ contains a connected component $C$ of type $\alpha$, find a vertex $v\in S$ with two adjacent vertices $c,\, d\in V(C)$ such that $v$ is adjacent to $c$ and non-adjacent to $d$. Then, proceed as follows:
		\begin{enumerate}
			\item If $C$ is non-bipartite, or if both $c$ and $d$ have at least two neighbors in $S$, then stop. In this case, the only valid total $3$-coloring corresponds to the trivial PED-set (Lemma~\ref{lem: several black}, item~\ref{lem14: item 4}).
			\item Otherwise, $C$ is bipartite and explore at most two possible extensions of the partial $3$-coloring:
			\begin{enumerate}
				\item If $c$ has only one neighbor $v$ in $S$, then:
				\begin{itemize}
					\item  Assign yellow to $c$ and extend to a compatible yellow/white assignment coloring for $C(c)$.
					\item Assign black to $v$ and add to $X$ each uncolored neighbor of $v$ in $G_0$.
					\item For each neighbor $u$ of $d$ in $S$ (there is at least one such vertex), assign yellow to $u$ and white to each neighbor of $u$ except $r$.
					\item For any uncolored vertex $s\in S_2$, one of the following statements is true if it is possible to extend the partial coloring to a valid coloring:
					\begin{itemize}
						\item Its neighbors in $G_0$ are vertices of $C$ and they are yellow vertices. This implies that $s\in B$.
						\item Its neighbors in $G_0$ are vertices of $C$ and they are white vertices. This implies that $s\in Y$.
						\item It shares a white neighbor with some $u$ and it implies that $s\in Y$ and its uncolored neighbors are white vertices.
					\end{itemize}
					\item If any inconsistency arises, discard the partial coloring. Otherwise, assign consistent yellow/black colors to the vertices in $X$.
					\item If the resulting $3$-coloring is valid, compare it with the current best PED-set $\mathcal{P}$ and update if necessary.
				\end{itemize}
				\item If $d$ has only one neighbor $v'$ in $S$, proceed analogously:
				\begin{itemize}
					\item Assign yellow to $d$ and extend to a compatible yellow/white assignment for $C(d)$. 
					\item Assign black to $v'$ and add to $X$ each uncolored neighbor of $v'$ in $G_0$.
					\item For each neighbor $u$ of $c$ in $S$ (in particular $v$ is a neighbor of $c$), assign yellow to $u$ and white to each neighbor of $u$ except $r$.
					%
					\item For any uncolored vertex $s\in S_2$, one of the following statements is true if it is possible to extend the partial coloring to a valid coloring:
					\begin{itemize}
						\item Its neighbors in $G_0$ are vertices of $C$ and they are yellow vertices. This implies that $s\in B$.
						\item Its neighbors in $G_0$ are vertices of $C$ and they are white vertices. This implies that $s\in Y$.
						\item It shares a white neighbor with $u$ and it implies that $s\in Y$ and its uncolored neighbors are white vertices.
					\end{itemize}
					\item If any inconsistency arises, discard the partial coloring. Otherwise, assign consistent yellow/black assignment to the vertices in $X$.
					\item If the resulting $3$-coloring is a valid, compare it with the current best PED-set $\mathcal{P}$ and update if necessary. 
				\end{itemize}
			\end{enumerate}
		\end{enumerate}
	\end{enumerate}
	This sub-algorithm can be implemented in $O(m+n)$-time.	
	\vspace{0.25cm}
	
	As a consequence of the correctness of these algorithm, Theorem \ref{thm: main result} has already been proven.
	
\section{Weighted minimum PED-set and Counting number of PED-sets}\label{sec: weighted and counting version}
	
In this section, we describe the adjustments and adaptations needed for the polynomial-time algorithm presented in the previous section so that it can also solve the weighted minimum PED-set problem and count the number of PED-sets for $P_6$-free graphs, without increasing the time complexity.

We begin by listing the parts of the algorithm that need to be modified, together with briefly explanation of how to implement these changes while preserving the algorithm's efficiency. Three of these modifications require a more detailed discussion, and each of them is addressed in a separate subsection.

In essence, the algorithm starts from a dominant subgraph and considers different possible partial colorings, attempting to extend them deterministically. However, branching is sometimes unavoidable, as several alternative extensions may be possible. Most of these branchings have a constant branching factor. The current best PED-set $\mathcal{P}$ now stores the minimum weight PED-set found so far. In addition, a new variable, $\#\mathcal{P}$, is introduced to record the number of PED-sets found up to the current stage.

\begin{enumerate}
	\item Step~1 of the main algorithm uses $O(n+m)$-time algorithm from~\cite{Br-Mo-2014} to find a DIM of the input graph $G$. In the positive case, the unweighted problem is solved. For the weighted and counting problems, however, this approach is no longer appropriate, since DIMs can have different weights and a lighter DIM is not necessarily a lighter PED-set. More configurations must therefore be taken into account. Two parts of the algorithm require modifications:
		\begin{enumerate}
			\item In Step~4 of the main algorithm, three partial colorings of $C$ of type~(d) must be considered. Each can be extended to at most one valid coloring in a similar way to configurations~(b) and~(c). The extra cost is $O(n+m)$. 
			\item At the beginning of non-black-vertex sub-algorithm (Configuration~I) the hypothesis of Lemma~\ref{lem: valid 3-coloring} may now hold, so extendible colorings under these conditions should also be considered. See details in Subsection~\ref{subsec: details One}. 
		\end{enumerate}
Instead of the execution of the $O(n+m)$-time algorithm from~\cite{Br-Mo-2014} to find a DIM, we define a vertex-weight function $\psi:V(G)\longrightarrow\mathbb{R}$ using the edge-weighted function $\omega:E(G)\longrightarrow\mathbb{R}$ as follow: $\psi(u)=\sum_{v\in N_G(u)}\omega(uv)$, for $u\in V(G)$. These weights can be computed in $O(n+m)$-time.
%
\item In Substep~4.a of non-black-vertex sub-algorithm (Configuration~I), statements~(i) and~(ii) should each be evaluated independently, taking their corresponding actions if true (each could lead to a different valid $3$-coloring). This adjustment does not affect the algorithm's complexity.
\item In Substep~4.b of the non-black-vertex sub-algorithm (Configuration I), for each connected component $C$ of $H_\pi$ only one PED-set of $C$ is currently considered. All PED-sets of $C$ must now be taken into account. See details in Subsection~\ref{subsec: details Two}.
\item In Step~4 of one-black-vertex sub-algorithm (Configuration~II), instead of choosing $r'$ with the lowest degree as $r$, choose the one with smallest weight $\psi(r')$, and add the number of possible $r'$ to $\#\mathcal{P}$. These changes do not alter the time complexity. 
\item In Step~2 of the several-black-vertices sub-algorithm (Configuration~III), ignore the stopping condition when $S_1=\emptyset$ and $G_0$ contains only trivial connected components.
\item In Substep 8.c of the several-black-vertices sub-algorithm (Configuration~III), it is no longer always optimal to choose option (ii) for each connected component $C$ of $H$.  For each $C$, compute the weight difference between option~(i) and option~(ii), which is exactly $\psi(V(C)\setminus S_2')$. Option~(i) is preferable for $C$ if and only if this value is negative. Add $2^{\#C}$ to $\#\mathcal{P}$, where $\#C$ is the number of connected components of $H$. Subtract one from $\#\mathcal{P}$ if $S_1=\emptyset$ and $G_0$ contains only trivial connected components. The extra cost is $O(m+n)$.
\end{enumerate}

\subsection{Detailed explanations of Modification (1.b)}\label{subsec: details One}

The hypothesis of Lemma~\ref{lem: valid 3-coloring} is as follows: $G$ is a $P_6$-free graph containing a maximal dominating (not necessarily induced) complete bipartite subgraph $K$ (with partitions $R$ and $S$), and $G$ admits a valid $3$-coloring $(B,Y,W)$ such that $|R\cap Y| = |S\cap Y| = 1$. This lemma guarantees that: 
\begin{enumerate}
	\item $|R| = 1$ or $|S| = 1$, and
	\item the corresponding PED-set is a DIM.
\end{enumerate}
We consider the following cases:
\begin{itemize}
\item \textbf{Case 1:} $|R|=|S|=1$. Clearly, $V(G)\setminus V(K)\subseteq W$ and $B=\emptyset$. If this unique $3$-coloring is valid, then its corresponding PED-set is a DIM consisting of exactly one edge, namely the unique edge of $K$.
\item \textbf{Case 2:} $|R|>1$ and $|S|=1$. Swap the labels of $R$ to $S$ and consider Case~3.
\item \textbf{Case 3:} $|R|=1$ and $|S|>1$. Since $K$ is maximal, we have $R=\{r\}$ and $S=N_G(r)$. Clearly, $r\in Y$. By Lemma~\ref{lem: valid 3-coloring}, we analyze all possible valid $3$-colorings according to the number of edges in $G[S]$. 

If $G[S]$ contains at least one edge, then the lemma describes a method to find at most two different possible valid $3$-colorings, and this can be done in $O(m+n)$-time. 

Now suppose that $G[S]$ has no edges. In this case, the lemma provides a procedure to find 
\[
W', W'' \subseteq S \cap W
\quad\text{and}\quad
Y', Y'' \subseteq (V(G) \setminus V(K)) \cap Y.
\]
This procedure can also be executed in $O(m+n)$ time.

If $Y'\cup Y''$ is not a dissociation set, or $W'\cup W''$ is not an independent set or $S'=S\setminus (W'\cup W'')=\emptyset$, then there is no valid $3$-coloring that satisfies the hypothesis of Lemma~\ref{lem: valid 3-coloring}. This check can be performed in $O(n+m)$-time. 

If instead $Y' \cup Y''$ is a dissociation set, $W' \cup W''$ is an independent set, and $S' \neq \emptyset$, then the task reduces to finding the vertices $s \in S'$ such that the corresponding $3$-coloring
\[
B = \emptyset, \quad 
Y = ((V(G) \setminus S) \setminus N_G(s)) \cup \{r,s\}, \quad
W = (S \cup N_G(s)) \setminus \{r,s\}
\]
is valid, and computing the weight of its corresponding DIM. The number of such vertices is added to $\#\mathcal{P}$, and $\mathcal{P}$ is updated if necessary.

Below is an $O(m+n)$-time procedure, \emph{One}, that determines all such vertices and the weights of their corresponding DIMs. The correctness and linear-time complexity of One follow from the facts that, for any two distinct vertices in $S'$, $r$ is their only common neighbor, and the neighborhood of each vertex in $S'$ is an independent set.
\end{itemize} 
%
		\begin{algorithmic}[1]
		\Procedure{One}{}
		\State $L \gets \emptyset$
		\State $partial\_sum \gets \frac{\psi(V(G))}{2}-\psi(S)$
		\State $\#pendant \gets 0$
		\For{$v \in V(G)\setminus V(K)$} 
		\State $deg[v] \gets |N_{G-V(K)}(v)|$
		\If{$deg[v] = 1 $}
		\State $\#pendant \gets \#pendant + 1$
		\EndIf
		\EndFor
		\For{$s\in S'$}
		\For{$w\in N_G(s)\setminus \{r\}$}
		\For{$z\in N_{G-V(K)}(w)$}
		\State $deg[z] \gets deg[z] - 1$
		\If{$deg[z] \leq 1 $}
		\State $\#pendant \gets \#pendant + 2 * deg[z] - 1$
		\EndIf
		\State $deg[w] \gets deg[w] - 1$
		\If{$deg[w] \leq 1 $}
		\State $\#pendant \gets \#pendant + 2 * deg[w] - 1$
		\EndIf	
		\EndFor
		\EndFor
		\If{$\#pendant = n-|S|-|N_G(s)| $} \Comment the yellow vertices define a DIM, compute the weight of such DIM and keep the pair $(s,weight)$ in $L$ 
		\State $L \gets L \cup \{(s,partial\_sum+\psi(s)-\psi(N_G(s)\setminus \{r\}))\}$
		\EndIf
		\For{$w\in N_G(s)\setminus \{r\}$}
		\For{$z\in N_{G-V(K)}(w)$}
		\If{$deg[z] \leq 1 $}
		\State $\#pendant \gets \#pendant - 2 * deg[z] + 1$
		\EndIf
		\State $deg[z] \gets deg[z] + 1$
		\If{$deg[w] \leq 1 $}
		\State $\#pendant \gets \#pendant - 2 * deg[w] + 1$
		\EndIf
		\State $deg[w] \gets deg[w] + 1$
		\EndFor
		\EndFor	
		\EndFor
		\State 
		\Return $L$
		\EndProcedure
	\end{algorithmic}

\subsection{Detailed explanations of Modification (3)}\label{subsec: details Two}

In this subsection, we describe how to efficiently determine: (i) the number of distinct PED-sets of $H_\pi$ that correspond to valid $3$-colorings with $S_2\cup X_2\subseteq Y$, and (ii) one such PED-set with minimum weight. 

Clearly, the value in (i) is obtained by multiplying the numbers of valid $3$-colorings in each connected component $C$ of $H_\pi$ where $V(C)\cap (S_2\cup X_2)\subseteq Y$. 

We use the following recursive procedure, \emph{Two} (with $S^\pi_2, X^\pi_2$, and $Z$ as arguments), to compute this number and find the PED-set for (ii). This procedure returns a number $Qty$, a subset $Z'\subseteq Z$, and the weight of the optimal DIMs. Here, $Qty$ is the answer to (i), and when $Qty\geq 1$, $Z'$ completes a valid $3$-coloring of $H_\pi$ by assigning: 
\begin{itemize}
	\item white color to $Z\setminus Z'$,
	\item  yellow color to $\{z\in Z' : d_G(z)=1\}$, and
	\item black to $\{ z \in Z' : d_G(z) \geq 2 \}$.
\end{itemize}

The  PED-set corresponding to this $3$-coloring is the answer to (ii). The correctness of Two is due to Lemma~\ref{lem: optimum coloring}, and its time complexity is $O(m*n)$, since there are at most $|Z'|+1$ calls to the procedure.
		\begin{algorithmic}[1]
		\Procedure{Two}{$S,\,X,\, Z$}
		\If{$Z = \emptyset$}
		\Return $(0,\emptyset, 0)$
		\EndIf
		\State Compute the induced subgraph $H=G[S\cup X \cup Y]$ and $\mathcal{C}$ its connected components
		\State $Qty \gets 1$
		\State $Z' \gets \emptyset$
		\State $Weight \gets 0$
				\For{$C \in \mathcal{C}$} 
		\If{$Z\cap V(C) = \emptyset$}
		\Return $(0,\emptyset, 0)$
		\EndIf
		\State $z \gets$ a vertex of $Z\cap V(C)$ with maximum degree in $C$
		\If{$d_C(z) < |(S\cup X)\cap V(C)|$}
		\Return $(0,\emptyset, 0)$
		\EndIf
		\State $(auxQty,auxZ,auxWeight)  \gets Two(S \cap V(C), X \cap V(C), (Z\cap V(C))\setminus\{z\})$
		\If{$auxQty = 0$}
		\State $Z' \gets Z'\cup \{z\}$
		\State $Weight \gets Weight+\psi(z)$
		\Else
		\State $Qty \gets Qty*(1+auxQty)$
		\If{$\psi(z) \leq auxWeight$}
		\State $Z' \gets Z'\cup \{z\}$
		\State $Weight \gets Weight+\psi(z)$
		\Else
		\State $Z' \gets Z'\cup auxZ$
		\State $Weight \gets Weight+auxWeight$
		\EndIf
		\EndIf
		\EndFor
		\State
		\Return $(Qty,Z', Weight)$
		\EndProcedure
	\end{algorithmic}
 
\section{Conclusions and further results}\label{sec: conclusions}
To the best of our knowledge, little is known about counting PED-sets and DIMs in graphs. For the latter, \cite{LinMS14IPL} presents a linear-time algorithm for claw-free graphs. Our PED-set counting algorithm (Section~\ref{sec: weighted and counting version}) can be adapted to count DIMs in cubic time by considering only colorings (d) and (e) in Fig.~\ref{Fig2} and Configuration~I; whenever a black vertex is found, we skip to the next partial $3$-coloring. This problem is NP-complete for many graph classes (cf.~\cite[Table~1]{doFor-Vi-Lin-Lu-Ma-Mo-Sz-2020}).


\end{document}